\renewcommand*{\backref}[1]{}
\renewcommand*{\backrefalt}[4]{%
\ifcase #1 (Not cited.)%
\or        (p.\,#2)%
\else      (pp.\,#2)%
\fi}
\newenvironment{proof}{{\bf Proof:  }}{\hfill\rule{2mm}{2mm}}
\newtheorem{thm}{Theorem}[section]
\newtheorem{cor}[thm]{Corollary}
\newtheorem{defn}[thm]{Definition}
\newtheorem{lem}[thm]{Lemma}
\newtheorem{prop}[thm]{Proposition}
\newtheorem{rem}[thm]{Remark}
\newtheorem{conj}[thm]{Conjecture}
\newtheorem{que}[thm]{Question}
\newtheorem{claim}[thm]{Claim}
\newcommand{\ds} {\displaystyle}
\newcommand{\R}[0]{\mathbb{R}}
\newcommand{\T}[0]{\mathbb{T}}
\newcommand{\C}[0]{\mathbb{C}}
\newcommand{\N}[0]{\mathbb{N}}
\newcommand{\Z}[0]{\mathbb{Z}}
\newcommand{\1}[0]{\mathbbm{1}}
\newcommand{\K}[0]{\mathbb{K}}
\newcommand{\Kl}[0]{\mathcal{K}}
\newcommand{\B}[0]{\mathcal{B}}
\newcommand{\eE}[0]{\mathbb{E}}
\newcommand{\F}[0]{\mathcal{F}}
\newcommand{\Id}[0]{\textrm{Id}}
\newcommand{\M}{\mathcal{M}}
\newcommand{\A}{\mathcal{A}}
\newcommand{\pr}{\mathcal{P}}
\newcommand{\bmu}[0]{\bm \mu}
\newcommand{\bml}[0]{\bm \lambda}
\newcommand{\eps}[0]{\varepsilon}
\newcommand{\qid}[0]{\enspace\lower6pt\vbox{\hrule height2.6pt depth-
2pt\hbox{\vrule
width0.6pt\kern1pt\vbox{\kern1pt\phantom{j}\kern1pt}\kern1.5pt
\vrule width0.6pt}\hrule height2.6pt depth-2pt}}
\def\build#1_#2^#3{\mathrel{\mathop{\kern 0pt#1}\limits_{#2}^{#3}}}
\def\tend#1#2#3{\build\hbox to 12mm{\rightarrowfill}_{#1\rightarrow #2}^{ #3}}
\newcommand{\setdef}{\stackrel {\rm {def}}{=}}
\title{Sarnak's M\"{o}bius disjointness for dynamical systems with singular spectrum and dissection of M\"{o}bius flow}
\author{{\small {\MakeLowercase{el}} Houcein {\MakeLowercase{el}} Abdalaoui} \\
{\small Univ Rouen Normandie}\\ 
{\small  CNRS, Department of Mathematics, LMRS  UMR 6085 F-76000, Rouen, France} \\         
{\medskip} \\
{and} \\
{\medskip} \\
{\small Mahesh Nerurkar} \\
{\small Department of Mathematics} \\
{\small Rutgers University, Camden NJ 08102 USA}}
\date{}
\begin{document}
\maketitle
\renewcommand{\theequation}{\thesection.\arabic{equation}}
\numberwithin{equation}{section}

\medskip

\begin{abstract}
	We prove that Sarnak’s M\"{o}bius orthogonality conjecture holds for compact metric dynamical systems in which every invariant measure has singular spectrum. The proof relies on a computation by W. Veech concerning the correlation between the M\"{o}bius function and its square, together with the Rokhlin–Sinai machinery, which links positive entropy to the presence of a countable Lebesgue component in the spectrum. 
	
We also present a proof of an unpublished theorem of Veech which asserts that if the first coordinate projection lies in the orthocomplement of the Pinsker $\sigma$-algebra of every quasi-generic measure for the M\"{o}bius flow, then Sarnak’s M\"{o}bius disjointness conjecture holds. Moreover, we observe that the converse implication is valid.

Among other consequences,
we obtain a simple proof of Matom{\"a}ki-Radziwi{\l}{\l}-Tao's theorem and Matom{\"a}ki-Radziwi{\l}{\l}'s
theorem  on the correlations of order two of the Liouville function without rate of convergence. We further present a M\"{o}bius-weighted ergodic theorem for weakly rigid operators due to Michael Lin.

We note that, aside from Davenport’s estimate, the only additional number-theoretic input in our argument is encoded in the Sarnak–Veech theorem. Moreover, we extend the Rokhlin–Sinai framework for  $\K$-systems so as to encompass the study of spectral properties of dynamical systems arising from number theory.




\vskip 1.0cm

\noindent {\bf Keywords}:
Topological dynamics, singular spectrum, Sarnak's M\"{o}bius disjointness conjecture, Pinsker
sigma algebra, affinity and Hellinger distance.

\small

\noindent {\bf Mathematics Subject Classification (2020):}
37A30,54H20,47A35, 11N64.
\end{abstract}

\pagenumbering{arabic}







\section{Introduction}

The purpose of this article is to establish the validity of Sarnak's Möbius orthogonality conjecture for dynamical systems with singular spectra. 
More precisely, this means that Sarnak's conjecture holds for any compact metric dynamical system for which every invariant measure has a singular spectrum. 
We remark that in \cite{AD}, it was observed that the Chowla conjecture of order two implies that Möbius disjointness holds for any dynamical system with singular spectra. 
We also recall that Sarnak, in his seminal paper, states that the Chowla conjecture on the multiple correlations of the Möbius function 
implies Sarnak's Möbius disjointness conjecture. For more details about the connection between the Chowla conjecture and the Möbius orthogonality conjecture, we refer to \cite{AV}.

We now briefly describe our proof strategy. We first obtain a special case of the Chowla conjecture which gives a correlation between the Möbius function and its square (Theorem \ref{SNMV}). Next, using a computation of W. Veech, we establish a key proposition (Proposition \ref{VMV}). Another ingredient of the proof involves understanding Pinsker sigma-algebras for any potential spectral measure (which we call a potential spectral measure) for the 'Möbius flow' (i.e., for the shift dynamical system generated by the Möbius function). In this understanding, a 'generating partition' type result of Rokhlin and Sinai plays an important role. Our argument also uses the existence of 'the Chowla–Sarnak–Veech measure' (which we call a CSV measure) (Theorem \ref{SV}), and the observation of Veech that the first projection lies in the orthocomplement of the $L^2$-space of the Pinsker factor of the CSV measure. This, combined with the 'Rokhlin–Sinai machinery', allows us to describe the spectral type of the Möbius flow. Once this is done, the rest of the argument uses the notion of 'affinity between measures', also called the 'Hellinger method'. This method was developed by Below and Losert \cite{BeL} and may go back to Coquet–Mendès France–Kamae \cite{CMFK}.

The paper is organized as follows. In Section 2, we recall the basic ingredients of this method, along with results on the spectral measure associated with a sequence. We give the proof of our main theorem in Section 3.  In Section 4, we give some consequence. Finlay, in section 5, we present a proof of an unpublished theorem due to W. Veech (Theorem \ref{Veech}). In light of the proof of this theorem, we formulate our result in the language of spectral isomorphisms (Corollary \ref{spect-iso}).
Before formally beginning the proof, we summarize the known cases where this conjecture has been proved using various techniques. We remark that the ingredients used until now broadly fall into three groups, which we describe below.

\begin{enumerate}[label=(\arabic*)]
\item The first and foremost ingredient is the prime number theorem (PNT) and the Dirichlet PNT.

\item The result of Matom\"{a}ki-Radziwi{\l}{\l}-Tao on the validity of average Chowla of order
two \cite{MRT}. This was used to establish the conjecture for systems with discrete spectra.
It was used by el Abdalaoui-Lema\'{n}czyk-de-la-Rue  \cite{ALR}, Huang-Wang and Zhang
\cite{HWZ} , Huang-Wang-Ye  \cite{HZY}.

\item The Daboussi-Katai-Bourgain-Sarnak-Ziegler criterion, which says that if a sequence $(a_n)$
satisfy for a large prime $p$ and $q$, the orthogonality of $(a_{np})$ and $(a_{nq})$, then the
orthogonality holds between $(a_n)$ and any multiplicative function. In fact this criterion is also
based on PNT, that is, the PNT implies Daboussi-Katai-Bourgain-Sarnak-Ziegler criterion. We remark
that however the converse implication is false. For example, take the trivial sequence $a_n=1$.
This sequence does not satisfy the criterion. But the M\"{o}bius sequence is orthogonal to the
constant sequence $a_n=1$, $n \in \N$ is exactly the PNT. This criterion was used first by
Bourgain-Sarnak-Ziegler \cite{BSZ} and then by many other authors. Of course it is not possible
to prove the conjecture just using this criterion because one needs to verify the conjecture for
the constant function in any system and for the eventually periodic sequence, this is where
again PNT and Dirichlet PNT come into play.
\end{enumerate}

Our method does not use any of the above techniques. From number theory, we need Davenport's theorem, which ensures that the Möbius function is orthogonal to any rotation \cite{Da}, and the Sarnak–Veech theorem, which ensures the existence of the CSV measure. The rest relies on tools from positive entropy systems and spectral theory, by which we mean the Rokhlin–Sinai machinery and its spectral consequences combined with the Hellinger method. On the other hand, we shall strengthen, as corollaries to our result, some results proved in \cite{MR}, \cite{LGE}, \cite{FW} using the above techniques (see, e.g., Corollaries \ref{main-MRT} and \ref{FWLG}).
\medskip

\section{Definitions and tools}\label{Tools}

\medskip

 We start by recalling the definition of the M\"{o}bius function, which is intimately linked to the Liouville
 function, denoted by $\bml$. The latter function is defined to be $1$ if the number of prime factors
 (counting with multiplicity) is even, and $-1$ if not. The M\"{o}bius function is given by
 \begin{equation}\label{Mobius}
 	\bmu(n)= \begin{cases}
 		1 {\rm {~if~}} n=1; \\
 		\bml(n) {\rm {~if~}} n
 		{\rm {~is~square-free~}} ; \\
 		0 {\rm {~if~not.}}
 	\end{cases}
 \end{equation}
 We recall that $n$ is square-free if $n$ has no factor in the subset
 $\pr_2\setdef\big\{p^2/ p \in \pr\big\}$, where,
 as customary, $\pr$ denotes the set of prime numbers.
 
 \noindent A topological dynamical system is a pair $(X,T)$ where $X$ is a compact metric space and
 $T$ is a homeomorphism. The topological entropy $\textrm{h}_{\textrm{top}}(T)$ of $T$ is given by
 \[\textrm{h}_{\textrm{top}}(T) = \underset{\varepsilon\to 0}\lim \underset{n\to +\infty}\limsup \dfrac{1}{n}
 \textrm{log}~\textrm{sep}(n, T, \varepsilon),\]
 where, for $n$ an integer and $\varepsilon>0$, $\textrm{sep}(n, T, \varepsilon)$ is the maximal possible
 cardinality of an ($n, T, \varepsilon$)-separated set in $X$; this means that for every two distinct points
 of it, there exists $0\leq j<n$ with $d(T^j(x), T^j(y)) > \varepsilon$, where $T^{j}$ denotes the
 $j$-th iterate of $T$. For more details, we refer to \cite[Chap. 7]{W}.
 
 \noindent We are now able to state Sarnak's M\"{o}bius disjointness conjecture. It states that for
 any compact metric topological dynamical system $(X,T)$ with topological entropy zero, we have,
 for any $x \in X$ and any $f\in C(X)$,
 $$
 \frac{1}{N}\sum_{n=1}^{N}\bmu(n)f(T^nx) \tend{N}{+\infty}{}0\,.
 $$
 
 \noindent The Chowla conjecture on the correlation of the M\"{o}bius function states that, for any
 $r\geq 0$, $1\leq a_1<\dots<a_r$, $i_s\in \{1,2\}$ not all equal to $2$, we have
 \begin{equation}\label{cza}
 	\sum_{n\leq N}\bmu^{i_0}(n)\bmu^{i_1}(n+a_1)\cdot\ldots\cdot\bmu^{i_r}(n+a_r)={\rm o}(N).
 \end{equation}
 
 \noindent This conjecture implies a weaker conjecture stated by Chowla in \cite[Problem 56. pp.~95--96]{chowla}.
 
 \noindent The Chowla conjecture has a dynamical interpretation. Indeed, one can view $\bmu$ as a point
 in a compact space $X_3=\{0,\pm 1\}^{\mathbb{N}}$. As customary, we consider the shift map $S$ on
 it, and by taking the map $s~:~(x_n) \in X_3 \mapsto (x_n^2) \in X_2 \setdef \{0,1\}^{\mathbb{N}}$,
 it follows that the topological dynamical system $(X_2,S)$ is a factor of $(X_3,S)$. We denote by
 $\textrm{pr}_1$ the projection map onto the first coordinate, that is, $\textrm{pr}_1(x)=x_1$, where
 $x \in X_i,$ $i=2,3$.
 
 For a careful study of the Chowla conjecture, one also needs to introduce the notion of admissibility.
 The latter notion is due to Mirsky \cite{Ms} (see also \cite{Sarnak}).
 
 \smallskip
 
 \begin{defn}
 	(I) The subset $A \subset \N$ is admissible if the cardinality $t(p,A)$ of classes
 	modulo $p^2$ in $A$, given by
 	\[ t(p,A) \setdef \left|\bigl\{z\in\Z/p^2\Z: \exists n\in A, n=z\ [p^2]\bigr\}\right|, \]
 	satisfies
 	\begin{equation}
 		\label{eq:def-admissible}
 		\forall p \in \pr ,\ t(p,A)<p^2.
 	\end{equation}
 	In other words, for every prime $p$, the image of $A$ under reduction mod $p^2$ is a proper
 	subset of $\Z/p^2\Z$.
 	
 	\noindent (II) An infinite sequence $x=(x_n)_{n\in\N} \in X_3$ is said to be \emph{admissible}
 	if its \emph{support} $\{n\in\N : x_n \neq 0\}$ is admissible. In the same way, a finite block
 	$x_1\ldots x_N\in\{0,\pm 1\}^N$ is \emph{admissible} if $\{n\in\{1,\ldots,N\}: x_n \neq 0\}$
 	is admissible. In the same manner, we define the admissible sets in $X_2$.
 \end{defn}
 
 \smallskip
 
 \noindent For each $i = 2,3$, we denote by $X_{\A_i}$ the set of all admissible sequences in
 $X_i$. Since a set is admissible if and only if each of its finite subsets is admissible, and a
 translation of an admissible set is admissible, $X_{\A_i}$ is a closed and shift-invariant subset
 of $X_i$, \textit{i.e.} a subshift. We further have that $\bmu^2$ is an admissible sequence,
 and $X_{\A_3}=s^{-1}(X_{\A_2}),$ where $s$ is the square mapping. We denote by
 $\B(\A_i)$ the Borel $\sigma$-algebra generated by $\A_i$, $i = 2,3$.
 
 \smallskip
 
\subsection{\bf Some tools from spectral theory of dynamical systems }

Let $(X,\A,\mu,T)$ be a measurable dynamical system, that is, $(X,\A,\mu)$ is a
probability space and  $T$ is an invertible, bi-measurable map which preserves 
 $\mu$,
i.e. $\mu(T^{-1}(A))=\mu(A)$, for every $A \in \A$. The dynamical system
is ergodic if every $T$-invariant set is trivial:
$\mu(T^{-1}(A) \triangle A)=0 \Longrightarrow \mu(A) \in \{0,1\}$. Transformation
$T$ induces an operator $U_T$ in $L^p(X)$ via $f \mapsto U_{T} (f)= f \circ T$
called Koopman operator. For  $p=2$ this operator is unitary and its spectral
resolution induces  a spectral decomposition of $L^{2} (X)$ \cite{P}:
\[
L^2(X)=\bigoplus_{i=0}^{+\infty}C(f_i) {\rm {~~and~~}}
\sigma_{f_1}\gg \sigma_{f_2}\gg\cdots
\]
where
\begin{itemize}
	\item $\{f_{i} \}_{i=1}^{+\infty}$ is a family of functions in $L^{2} (X)$;
	\item $C(f)\setdef \overline{\rm {span}}\{U_T^n(f): n \in \Z\}$ is the cyclic
	space generated by $f \in L^2(X)$;
	\item  $\sigma_f$ is the {\em spectral measure} on the circle generated by $f$
	via the Bochner-Herglotz relation
	\begin{equation}\label{fspmeasure}
	\widehat{\sigma_f}(n)=<U_T^nf,f>=\int_X f \circ T^n(x) \overline{f}(x)d\mu (x);
	\end{equation}
	\item for any two measures on the circle $\alpha$ and $\beta$,
	$\alpha \gg \beta$ means $\beta $ is absolutely continuous  with respect to
	$\alpha$: for any Borel set $A$, $\alpha(A)=0 \Longrightarrow \beta(A)=0$.
	The two measures $\alpha$ and  $\beta$ are equivalent if  and only if
	$\alpha\gg\beta$ and $\beta\gg\alpha$. We will denote measure equivalence by
	$\alpha \sim \beta$.
\end{itemize}
The spectral theorem ensures this spectral decomposition is unique up to
isomorphisms.
The {\em maximal spectral type} of $T$ is the equivalence class of the Borel
measure $\sigma_{f_1}$. The multiplicity function
$\M_{T} : \T \longrightarrow \{1,2,\cdots,\} \cup \{+\infty\}$ is defined
$\sigma_{f_1}$ a.e. and
\[
\M_T(z)=\ds \sum_{j=1}^{+\infty}\1_{Y_j}(z), \quad {\rm  where}, \ Y_1=\T \ {\rm and}\
Y_j={\rm{~supp~}}\frac{d\sigma_{f_j}}{d\sigma_{f_1}} \quad \forall j \geq 2.
\]
An integer $n \in \{1,2,\cdots,\} \cup \{+\infty\}$ is called an essential value of
$M_T$ if $\sigma_{f_1}\{ z \in \T : M_T(z)=n\}>0$. The multiplicity
is uniform or homogeneous if there is only one essential value of $M_T$.
The essential supremum of $M_T$ is called the maximal  spectral multiplicity of $T$.
The map $T$
\begin{itemize}
	\item has simple spectrum if $L^2(X)$ is a single cyclic space;
	\item has discrete spectrum if $L^2(X)$ has an
	orthonormal basis consisting of eigenfunctions of $U_T$,
	(in this case $\sigma_{f_1}$ is a discrete measure);
	\item has Lebesgue spectrum if $\sigma_{f_1}$ is equivalent 
	to the Lebesgue measure. It has absolutely continuous (or singular)
	spectrum if $\sigma_{f_1}$ is absolutely continuous (or singular)
	with respect to the Lebesgue measure.
\end{itemize}

\smallskip

\begin{defn} \label{defsingular} The {\em reduced spectral type} of the dynamical system is its
spectral type on the $L_0^2 (X)$ - the space of square integrable functions with zero mean.
Two dynamical systems are called {\em {spectrally disjoint}} if their reduced spectral types are
mutually singular.
\end{defn}

\smallskip

\noindent{}As customary, for any sub-$\sigma$-algebra $\mathcal{C}$ of $\A$, we denote by $L^2 (X, \mathcal{C}, \mu)^{\perp}$ the orthocomplement to the subspace $L^2 (X, \mathcal{C}, \mu)$ in $L^2(X,\A,\mu)$.



\smallskip





\noindent  We appeal to the following fundamental theorem of Rokhlin–Sinai, which is central to their framework. 
It also provides the cornerstone of the proof of our main result.

\begin{thm}[Rokhlin-Sinai \cite {Rh}]\footnote{See also \cite[Theorem 12, p.69]{P}, \cite[Theorem 18.9, p.323]{G}.} \label{Q}Let $(X,\A,\mu,T)$ be a measure theoretic dynamical
system on a Lebesgue space and $\Pi(T)$ be its Pinsker $\sigma$-algebra. Then, there exists a sub-$\sigma$-algebra $\F \subset \A$ such that $T^{-1}\F \subset \F$, $\bigvee_{n=0}^{+\infty}T^n \F=\F_{+\infty}=\A$ (i.e., the $\sigma$-algebra generated by $\{T^nF\ |\ F\in \F\,,\ n\in \{0\}\cup \N\}$ is $\A$), and $\bigcap_{n=0}^{+\infty}T^{-n}\F=\F_{-\infty}=\Pi(T)$.
\end{thm}
\noindent{} As is customary, if the sequence of $\sigma$-algebras $(\mathcal{F}_n)$ is increasing, we write  
\[
\mathcal{F}_n \nearrow \mathcal{F}_{+\infty},
\qquad \text{where } \mathcal{F}_{+\infty} := \sigma\!\left(\bigcup_{n} \mathcal{F}_n\right)=\bigvee_{n=0}^{+\infty}\F_n.
\]
Similarly, if the sequence of $\sigma$-algebras $(\mathcal{F}_n)$ is decreasing, we write  
\[
\mathcal{F}_n \searrow \mathcal{F}_{-\infty},
\qquad \text{where } \mathcal{F}_{-\infty} := \bigcap_{n} \mathcal{F}_n.
\]

Let us emphasize that if the measure-theoretic entropy of the dynamical system  $(X,\A,\mu,T)$ is positive (i.e. $h_\mu(T)>0$),  then the sub-\(\sigma\)-algebra  $\F \subset \A$  is non-trivial. We recall that  $h_\mu(T)$ is given by 

$$h_\mu(T)=\sup_{\mathcal{P}, H(P)<+\infty}h_\mu(T,\mathcal{P}),$$
where 
$$H(P)=-\sum_{A \in \mathcal{P}}\mu(A)\log(\mu(A)),$$
and 
$$h_\mu(T,\mathcal{P})=\lim \frac{1}{n}H_\mu(\bigvee_{i=0}^{n-1}T^{-i}\mathcal{P})=
\inf_{n \geq 1}H_\mu(\bigvee_{i=0}^{n-1}T^{-i}\mathcal{P}).$$
For each $n \in \Z$, we denote by \(H_n\) the Hilbert space 
\(L_0^2(X, T^n \mathcal{F}, \mu)\), and we define
\[
H_n \ominus H_{n-1}
= \left\{ f \in H_n \ :\ \forall g \in H_{n-1},\ \langle f, g \rangle = 0 \right\}.
\]
In this setting, we have the following theorem.

\smallskip

\begin{thm}[Rokhlin-Sinai \cite{CFS}]\label{CFS-F} If the measure-preserving transformation $T$ possesses a sub-\(\sigma\)-algebra  $\F \subset \A$ such that $\F \subsetneq T \F $, then the unitary operator $U_T$ has countable Lebesgue spectrum on $ \ds \bigoplus_{n \in \Z}H_n$.
\end{thm}
For the proof of Theorem \ref{CFS-F}, we refer to \cite[Chap. 13, pp. 338-339]{CFS}. As a consequence, of Theorem \ref{Q} and Theorem \ref{CFS-F}, we have the following Proposition due to Rokhlin and Sinai, (see Lemma (14.1-14.3) in \cite{Rh}).

\smallskip

\begin{prop}[Rokhlin-Sinai \cite {Rh}]\label{P}Let $(X,\A,\mu,T)$ be a measure theoretic dynamical
system on a Lebesgue space and $\Pi(T)$ be its Pinsker $\sigma$-algebra. Suppose $h_\mu (T)$
is positive. Then

\noindent (i) $L^2 (X,\Pi (T),\mu )^\perp$-the orthocomplement to the subspace
$L^2(X,\Pi (T),\mu)$ in $L^2(X,\A,\mu)$ is infinite dimensional and

\noindent (ii) $U_T$ has countable Lebesgue spectrum on $L^2 (X,\Pi (T),\mu )^\perp$.
\end{prop}

\medskip

\subsection{\bf Spectral measure of a sequence}\label{sequence-spectral-measure}

\medskip

The notion  of  spectral measure for sequences was introduced by Wiener in 1933, (see \cite{Wiener}).
Therein, he defines a space $\mathcal{W}$ of complex bounded sequences $a =(a_{n})_{n \in \N}$ such that
\begin{equation}\label{Sspace}
\lim_{N \longrightarrow +\infty}
\frac{1}{N}\sum_{n=0}^{N-1}a_{n+k}\overline{a}_{n}=F(k)
\end{equation}
exists for each integer $k \in \N$. The sequence $F(k)$ can be extended to negative
integers by setting
\[
F(-k)=\overline{F (k)}.
\]
It can be further seen that $F$ is positive definite on $\Z$ and therefore by the
Herglotz-Bochner theorem, there exists a unique positive finite measure $\sigma_a$ on
the circle $\T$ such that the Fourier coefficients of $\sigma_a$ are given by the sequence $F$, that is,
\[
\widehat{\sigma_a}(k)\stackrel{\rm{def}}{=}
\int_{\T} e^{-ikx} d\sigma_{a }(x) = F(k).
\]
The measure $\sigma_a $ is called the {\em spectral measure of the sequence $a$}.\\

\smallskip

\begin{rem}\label{IG}\
\begin{enumerate}[(I)]
\item We mention that according to Chowla conjecture, the spectral measure of the M\"{o}bius function
is the Lebesgue measure.

\item Given a topological dynamical system $(X,T)$, $f\in C(X)$ and $x\in X$, there is a natural connection
between the spectral measure of the dynamical sequence $n\mapsto f(T^{n}x)$ and the spectral type of
the dynamical system. Indeed, for a uniquely ergodic system $(X,T,\mu)$, this sequence belongs to the
Wiener space $\mathcal{W}$ and its spectral measure is exactly the spectral measure of the function $f$
\[
\widehat{\sigma}_f(k) = <U^{k}_T(f),f>=\int f \circ T^{k}(x). \overline{f(x)} d\mu(x)\,.
\]
In the general situation, the following notion of a `(set of) potential spectral measures' of $x$ captures
the appropriate concept.
\end{enumerate}
\end{rem}

\smallskip

\begin{defn} (1) Given a compact metric dynamical system $(X,T)$ and $x\in X$, let $\mathcal{I}_T(x)$
be the weak-star closure of the set $\big\{\frac{1}{N}\sum_{n=1}^{N}\delta_{T^nx}\ |\  N\in \N \big\}$).
Note that any $\nu_x \in  \mathcal{I}_T (x)$ is $T$-invariant. Each such $\nu_x$ will be called a
potential measure determined by $x$.

\noindent (2) For each such potential measure $\nu_x$, there is a subsequence $(N_{\ell})$ such that
for any $k\in \Z$ and for any $f\in C(X)$ we have,
\begin{equation}
\lim \frac{1}{N_{\ell}}\sum_{n=1}^{N_{\ell}}\delta_{T^nx}(f \circ T^k.\overline{f} ) =
\lim \frac{1}{N_{\ell}}\sum_{n=1}^{N_{\ell}} f(T^{n+k}x) \overline{f}(x) = \int f \circ T^k(y)
\overline{f}(y)d\nu_x =\widehat{\sigma_{f,\nu_x}}(k)\, ,
\end{equation}
where $\sigma_{f,\nu_x}$ is the spectral measure associated to the vector $f$ in the Hilbert space
$L^2(X,\nu_x)$. Such a spectral measure $\sigma_{f,\nu_x}$ will be called a potential spectral
measure determined by $x$, corresponding to the function $f$. 
\end{defn}

\medskip

\subsection{\bf Correlations and Affinity}

The notion of `affinity' was introduced and studied in a series of papers by Matusita
\cite{Matusita1},\cite{Matusita2},\cite{Matusita3} and  it is also called Bhattacharyya coefficient
\cite{Bhatta}. It has been widely used in statistics literature to quantify the similarity between two
probability distributions. The affinity  between two finite measures is defined by the integral of the
corresponding geometric mean. Let $\mathcal{M}_1(\T)$ be a set of probability measures on
the circle $\T$ and $\eta,\nu \in \M_1(\T)$ two measures in this space. There exists a probability
measure $\lambda$ such that $\eta$ and $\nu$ are absolutely continuous with respect to $\lambda$,
(take for example $\lambda=\frac{\eta+\nu}2$). Then the {\em affinity} between $\eta$ and
$\nu$ is defined by
\begin{equation}\label{affinity}
G(\eta,\nu)=\ds \int \sqrt{\frac{d\eta}{d\lambda}. \frac{d\nu}{d\lambda}} d\lambda.
\end{equation}
This definition does not depend on $\lambda$. Affinity is related to the Hellinger distance which is 
defined as
\[
H(\eta,\nu)=\sqrt{2(1-G(\eta,\nu))}.
\]
As a consequence of Cauchy-Schwarz inequality we have the following,
\[
0 \leq G(\eta,\nu) \leq  1.
\]
\begin{rem}\
\begin{enumerate}[(I)]
\item The definition of affinity can be extended to any pair of positive finite measures by normalizing them.

\item It is an easy exercise to see that $G(\eta,\nu)=0$ if and only if $\eta$ and $\nu$
are mutually singular (denoted by $\eta \bot \nu$):
this means that there exists a pair of disjoint Borel sets $A$ and $B$ such that $\eta$ is concentrated on $A$ and
$\nu$ is concentrated on $B$, ( a measure $\rho$ is concentrated on a Borel set $E$ if
$\rho(F)=0$ if and only if $ F \cap E= \emptyset$). Similarly, $G(\eta,\nu)=1$ holds if
and only if $\eta$ and $\nu$ are equivalent. 
Affinity can be used to compare sequences of measures via the
following theorem. 
\end{enumerate}
\end{rem}

\begin{thm}[Coquet-Kamae-Mand\`es-France  \cite{CMFK}]\label{coquet-france}
	Let $(P_n)$ and $(Q_n)$ be two sequences of probability measures
	on the circle, weakly converging to the probability measures $P$ and $Q$
	respectively. Then
	\begin{equation}\label{limsup}
	\limsup_{n \longrightarrow +\infty } G(P_n,Q_n) \leq G(P,Q).
	\end{equation}
\end{thm}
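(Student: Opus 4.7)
The plan is to reduce upper semi-continuity of $G$ under weak convergence to a variational characterization in which $G$ appears as an infimum of functionals that are (asymptotically) continuous in the weak-star topology.

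\textbf{Step 1: Variational formula.} I would first establish that for any two finite positive measures $\eta,\nu$ on $\T$,
\begin{equation*}
G(\eta,\nu) \;=\; \inf_{\pi}\ \sum_{k=1}^{m}\sqrt{\eta(A_k)\,\nu(A_k)},
\end{equation*}
where the infimum ranges over finite Borel partitions $\pi=\{A_1,\dots,A_m\}$ of $\T$. The direction ``$\leq$'' follows by applying Cauchy-Schwarz on each cell $A_k$ to $\sqrt{d\eta/d\lambda}\,\mathbf{1}_{A_k}$ and $\sqrt{d\nu/d\lambda}\,\mathbf{1}_{A_k}$ in $L^2(\lambda)$ with $\lambda=(\eta+\nu)/2$, and summing over $k$. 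For the reverse direction, I would approximate $f=d\eta/d\lambda$ and $g=d\nu/d\lambda$ in $L^1(\lambda)$ by simple functions constant on a common partition; on such a partition the Cauchy-Schwarz step becomes an equality, so the infimum over partitions recovers $\int\sqrt{fg}\,d\lambda$.

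\textbf{Step 2: Passing to the limit.} Fix $\varepsilon>0$ and choose a partition $\pi=\{A_1,\dots,A_m\}$ realizing $\sum_k\sqrt{P(A_k)\,Q(A_k)} < G(P,Q)+\varepsilon$. Because $\T$ is one-dimensional and the combined measure $P+Q$ has at most countably many atoms, I can refine $\pi$ into arcs whose endpoints simultaneously avoid the atoms of $P$ and $Q$, so that each $A_k$ is a continuity set of both limit measures. The portmanteau theorem then yields $P_n(A_k)\to P(A_k)$ and $Q_n(A_k)\to Q(A_k)$, hence
\begin{equation*}
\sum_{k=1}^{m}\sqrt{P_n(A_k)\,Q_n(A_k)}\ \longrightarrow\ \sum_{k=1}^{m}\sqrt{P(A_k)\,Q(A_k)}\ <\ G(P,Q)+\varepsilon.
\end{equation*}
Combining this convergence with the Step~1 upper bound $G(P_n,Q_n)\leq\sum_k\sqrt{P_n(A_k)Q_n(A_k)}$ gives $\limsup_n G(P_n,Q_n)\leq G(P,Q)+\varepsilon$, and letting $\varepsilon\to 0$ concludes.

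The genuinely content-carrying step is the variational formula of Step~1; once it is in hand, upper semi-continuity becomes a textbook consequence of the portmanteau theorem, since an infimum of functionals that are continuous at $(P,Q)$ is automatically upper semi-continuous there. The arc-refinement manoeuvre is a small but necessary technicality that exploits the one-dimensional geometry of $\T$ to turn weak convergence into pointwise convergence on the chosen partition, and is essentially where the hypothesis that we work on the circle is used.
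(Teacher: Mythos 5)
The paper does not actually prove this theorem: it is quoted from Coquet--Kamae--Mend\`es-France \cite{CMFK} without proof, so there is no in-paper argument to compare against. Your proposal is the standard route to this upper semi-continuity statement, and its two pillars are sound: the Cauchy--Schwarz inequality gives $G(\eta,\nu)\le\sum_k\sqrt{\eta(A_k)\nu(A_k)}$ for \emph{every} finite Borel partition (this is the only direction of the variational formula you actually need for the measures $P_n,Q_n$), and the portmanteau theorem converts weak convergence into convergence of masses on continuity sets.

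The one step that does not work as written is the ``arc-refinement manoeuvre.'' A near-optimal partition $\pi$ produced by your Step~1 consists of arbitrary Borel sets, and an arbitrary Borel set cannot be partitioned into arcs; a genuine refinement of $\pi$ must keep each new cell inside a cell of $\pi$, while replacing $\pi$ by an unrelated arc partition could a priori increase the sum $\sum_k\sqrt{P(A_k)Q(A_k)}$. The fix is to bypass the arbitrary partition altogether: take a refining sequence of partitions of $\T$ into arcs whose endpoints lie in a countable dense set avoiding the (at most countably many) atoms of $P+Q$, so that the sequence generates the Borel $\sigma$-algebra. Writing $f_\pi=\eE(f\mid\pi)$, $g_\pi=\eE(g\mid\pi)$ with $f=dP/d\lambda$, $g=dQ/d\lambda$, $\lambda=(P+Q)/2$, one has $\sum_k\sqrt{P(A_k)Q(A_k)}=\int\sqrt{f_\pi g_\pi}\,d\lambda$, and martingale convergence in $L^1(\lambda)$ drives this to $G(P,Q)$ along the refining sequence. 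This produces directly a partition into $P$- and $Q$-continuity sets with sum below $G(P,Q)+\varepsilon$, after which your Step~2 limit argument goes through verbatim. With that substitution the proof is complete; note also that the circle plays no essential role here, since the same generating-partition argument works on any separable metric space.
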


\begin{rem}
As in the case of the affinity, this result can be generalized
to any sequence of positive non-trivial finite measures $P_{n}$, $Q_{n}$ on a compact space converging weakly to two positive non trivial finite measures $P$ and $Q$.
\end{rem}

\noindent We aim to use the notion of affinity, together with Theorem~\ref{coquet-france}, 
to estimate the orthogonality properties of pairs of sequences in the Wiener space 
$\mathcal{W}$ (as defined in Subsection~\ref{sequence-spectral-measure}). 
To this end, we replace the sequence of Fourier coefficients 
\[
\frac{1}{n}\sum_{j=0}^{n-1} g_{j+k}\,\overline{g_{j}}
\]
by a sequence of finite positive measures on the unit circle. 

\noindent For any $g \in \mathcal{W}$, we define the sequence of measures
\begin{equation}\label{rhodef}
d\sigma_{g,n}(x) = \rho_{g,n}(x)\,\frac{dx}{2\pi}, 
\qquad \text{where} \quad
\rho_{g,n}(x) = \left|\frac{1}{\sqrt{n}}\sum_{j=0}^{n-1} g_j e^{ijx}\right|^2.
\end{equation}

With this definition $\sigma_{g,n}$ defines a finite positive measure on the
circle. Moreover  we have the relation
\[
\frac{1}{n}\sum_{j=0}^{n-1} g_{j+k}\overline{g_{j}}=
\int_{0}^{2\pi } e^{-ikx} d\sigma_{g,n} (x) \ + \ \Delta_{n,k}  =
\widehat{\sigma }_{g,n} (k)+ \Delta_{n,k},
\]
where
\[
\left| \Delta_{n,k}  \right|=
\left| \frac{1}{n}\sum_{j=n-k}^{n-1} g_{j+k}\overline{g_{j}}  \right| \leq \
\frac{k}{n} \sup_j |g_{j}|^{2} \tend{n}{+\infty}{} 0.
\]
Taking the limit we have
\[
\widehat{\sigma }_{g} (k) = \lim_{n\to\infty}
\frac{1}{n}\sum_{j=0}^{n-1} g_{j+k}\overline{g_{j}}= \lim_{n\to\infty}
\widehat{\sigma }_{g,n} (k),
\]
so the sequence of measures $(\sigma_{g,n})_{n \in \N}$ converges weakly to $\sigma_{g}$.\\

\noindent It may be possible that the bounded sequence $(g_{n})_{n \in \N}$ does not belong to the
Wiener space $\mathcal{W}$ (see eq. \eqref{Sspace}) but
we can always extract a subsequence $(n_r)$ in \eqref{Sspace} such that
\[
\lim_{r \rightarrow \infty}
\frac{1}{n_r}\sum_{j=0}^{n_r-1}g_{j+k} \overline{g_{j}}
\]
exists for each $k\in \N$.
In fact, consider the sequence of finite positive measures $(\sigma_{g,n})_{n \in \N}$ on the circle defined in \eqref{rhodef}. These measures are all finite and $\sigma_{g,n} (\T)\leq  \|g\|_\infty^{2}
= \sup_j |g_{j}|^{2}$, for all $n$. Therefore they all belong the ball $B(0,\|g\|_\infty^{2})$
centered at $0$ with radius $\|g\|_\infty^{2}$ in the  set of measures on the circle. This subset
is compact so there exists a subsequence $(n_r)$ such that the sequence of probability measures
$(\sigma_{g ,n_r})_{r \in \N}$ converges weakly to some probability measure $\sigma_{g,(n_r)}$.
The measure $\sigma_{g ,(n_r)}$ is called the spectral measure of the sequence $g$ along the
subsequence $(n_r)$.\\

\noindent We will also need the following result whose proof follows from Theorem \ref{coquet-france}.
\begin{cor}\cite{BeL}\label{BL} Let $g = (g_{n})_{n\in \N},
h= (h_{n})_{n\in \N} \in \mathcal{W}$ be two non trivial sequences i.e.
$\widehat{\sigma}_{g} (0)>0$ and $\widehat{\sigma}_{h} (0)>0$. Then
\begin{equation}\label{BL-log}
\limsup_{n\to \infty} \left | \frac{1}{n}  \sum_{j=1}^{n}g_{j}\overline{h}_{j}\right|
\leq \sup \big\{G(\sigma_{g,(n_r)} ,\sigma_{h,(m_r)})\big\}.
\end{equation}
where the supremum on the right-hand side is taken over all subsequence $(n_r)$, $(m_r)$ for
which the spectral measures exists.	
\end{cor}

\smallskip

\begin{rem}We warn the reader that the inequality \eqref{BL-log} is, in general, strict. 
Indeed, as observed by Below and Losert~\cite{BeL}, the spectral measure of the sequence 
$g(n) = (-1)^{\lfloor \log(n) \rfloor}$ is the Dirac measure at $1$, yet
\begin{align}\label{Am}
\lim_{n \to \infty} \left| \frac{1}{n} \sum_{j=1}^{n} g_j \overline{z}^{\,j} \right| = 0,
\end{align}
for every $z \in \mathbb{T}$.

We also mention that there exists a class of counterexamples arising in number theory, 
such as the sequences
\[
g(n) = n^{i \log n}, \qquad g(n) = n^{it}, \quad t \neq 0, \text{ etc.}
\]
To see this, note that for any $k \in \mathbb{Z}$,
\[
\frac{1}{n} \sum_{j=1}^{n} g_{j+k} \overline{g_j} 
= \frac{1}{n} \sum_{j=1}^{n} 
   e^{\,i\big( (\log(j+k))^2 - (\log j)^2 \big)}.
\]
Moreover,
\[
(\log(j+k))^2 - (\log j)^2
= \log\!\Big(1 + \frac{k}{j}\Big)\, \log j \,
   \Big(1 + \log\!\Big(1 + \frac{k}{j}\Big)\Big),
\]
and by Taylor’s theorem,
\[
\log\!\Big(1 + \frac{k}{j}\Big)\, \log j \,
   \Big(1 + \log\!\Big(1 + \frac{k}{j}\Big)\Big)
   \sim \frac{k}{j}\,\log j\,\Big(1 + \frac{k}{j}\Big).
\]
Therefore,
\[
\lim_{n \to \infty} 
   \frac{1}{n} \sum_{j=1}^{n} g_{j+k} \overline{g_j}
   = \lim_{n \to \infty} 
   \frac{1}{n} \sum_{j=1}^{n} 
   e^{\,i\big((\log(j+k))^2 - (\log j)^2\big)}
   = 1.
\]

By applying van der Corput’s inequality~\cite[Theorem~2.7, p.~17]{KN}, 
one can verify that \eqref{Am} still holds.  

Finally, we note that this class of sequences forms a subclass of a broader family 
containing the bounded Besicovitch sequences. This larger class was introduced 
by Bertrandias and is known as the class of $\mathcal{M}^1$-almost periodic sequences 
(see~\cite[pp.~69–73]{Bert}). We also point out that the dynamical behavior of such 
sequences is quite different from that of the M\"{o}bius function.

\end{rem}

\medskip

\section{The main result}In this section, we begin by stating our main result (Theorem~\ref{main}) and by developing the necessary tools, namely Theorem~\ref{SNMV} and Proposition~\ref{VMV}.

\smallskip

\begin{thm}\label{main}Let $(X,T)$ be a dynamical system such that for any invariant measure
the spectrum is singular. Then, the M\"{o}bius disjointness holds.
\end{thm}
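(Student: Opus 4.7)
The plan is to use the affinity estimate of Corollary \ref{BL} to reduce Sarnak's conjecture to a statement about mutual singularity of two families of spectral measures on the circle, and then to show that one family is absolutely continuous (via the Pinsker analysis) while the other is singular by hypothesis. Fix $x\in X$ and $f\in C(X)$, and set $c:=\limsup_{N}\bigl|\tfrac{1}{N}\sum_{n=1}^{N}\bmu(n)f(T^{n}x)\bigr|$. Passing to a subsequence $(N_k)$ along which the limsup is achieved, and then to a further subsequence, I can arrange that $\tfrac{1}{N_k}\sum_{n\le N_k}\delta_{S^{n}\bmu}$ converges weak-$\star$ to an invariant probability measure $\nu$ on the orbit closure of $\bmu$ in $X_3$, and that $\tfrac{1}{N_k}\sum_{n\le N_k}\delta_{T^{n}x}$ converges weak-$\star$ to a $T$-invariant measure $\mu_x \in \mathcal{I}_T(x)$. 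The corresponding potential spectral measures are then $\sigma_{\mathrm{pr}_1,\nu}$, the spectral measure of the first-coordinate projection in $L^{2}(X_3,\nu)$, and $\sigma_{f,\mu_x}$, the spectral measure of $f$ in $L^{2}(X,\mu_x)$.

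Corollary \ref{BL} gives $c \le G(\sigma_{\mathrm{pr}_1,\nu},\sigma_{f,\mu_x})$. By hypothesis the measure $\mu_x$ has singular spectrum, so $\sigma_{f,\mu_x}$ is singular with respect to Lebesgue measure $m$ on $\T$. It therefore suffices to prove that, for every $\nu$ that can arise this way as a weak-$\star$ limit of Cesaro averages of $\delta_{S^{n}\bmu}$, the spectral measure $\sigma_{\mathrm{pr}_1,\nu}$ is absolutely continuous with respect to $m$; for then $\sigma_{\mathrm{pr}_1,\nu}\perp\sigma_{f,\mu_x}$, so $G(\sigma_{\mathrm{pr}_1,\nu},\sigma_{f,\mu_x})=0$, forcing $c=0$.

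To get absolute continuity I would invoke Veech's Theorem \ref{Veech} together with Lemma \ref{P}. Concretely, the task reduces to showing that $\mathrm{pr}_1$ lies in the orthogonal complement $L^{2}(X_3,\Pi(S),\nu)^{\perp}$ of the Pinsker $\sigma$-algebra of $(X_3,S,\nu)$. Once this is established, Lemma \ref{P} (combined with Lemma \ref{RH} to secure infinite-dimensionality of the orthocomplement) implies that $U_S$ has countable Lebesgue spectrum on $L^{2}(X_3,\Pi(S),\nu)^{\perp}$, so the spectral measure $\sigma_{\mathrm{pr}_1,\nu}$ is absolutely continuous (even Lebesgue) on its support. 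To see that $\mathrm{pr}_1 \perp L^{2}(X_3,\Pi(S),\nu)$, I would exploit the special Chowla-type correlation between $\bmu$ and $\bmu^2$ provided by Theorem \ref{SNMV}, together with Veech's computation encoded in Proposition \ref{VMV} and the existence/structure of the Chowla-Sarnak-Veech measure (Theorem \ref{SV}). The $\bmu$ vs $\bmu^{2}$ correlation, translated via $\mathrm{pr}_1\circ s = \mathrm{pr}_1^{2}$, yields orthogonality of $\mathrm{pr}_1$ to shifts of $\mathrm{pr}_1\circ s$ in $L^{2}(X_3,\nu)$, and Proposition \ref{VMV} upgrades this to orthogonality against the whole factor $\sigma$-algebra generated by $s:X_3\to X_2$; since the admissible subshift $X_{\mathcal{A}_2}$ has zero topological entropy and hence zero measure-theoretic entropy with respect to $s_{*}\nu$, this factor is contained in $\Pi(S)$, and a disintegration argument over $s$ finishes the job.

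I expect the decisive obstacle to be the last step: showing $\mathrm{pr}_1\perp L^{2}(X_3,\Pi(S),\nu)$ in full generality. The two-point correlation of Theorem \ref{SNMV} only directly gives orthogonality to the cyclic space generated by $\mathrm{pr}_1\circ s$, whereas the Pinsker algebra is potentially larger than the $s$-factor (Davenport's theorem is what lets one handle the discrete spectral component, but the singular continuous part of the Pinsker algebra must be ruled out as well). Bridging this gap is exactly where Proposition \ref{VMV} and the fine structure of the Chowla-Sarnak-Veech measure must do their work, and tracking the interaction between the square-free factor, the disintegration of $\nu$, and the zero-entropy hypothesis on $(X_{\mathcal{A}_2},S,s_*\nu)$ is the delicate part of the argument.
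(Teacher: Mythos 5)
Your overall architecture is the same as the paper's: reduce via Corollary \ref{BL} to the affinity $G(\sigma_{\mathrm{pr}_1,\nu},\sigma_{f,\mu_x})$, note that the hypothesis makes $\sigma_{f,\mu_x}$ singular, and then try to show $\sigma_{\mathrm{pr}_1,\nu}$ is absolutely continuous by a Pinsker/Rokhlin--Sinai argument, so that the two measures are mutually singular and the affinity vanishes. That reduction, and the identification of Theorem \ref{SNMV}, Proposition \ref{VMV} and Lemmas \ref{RH}, \ref{P} as the working parts, all match the paper.

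The genuine gap is exactly where you flag it, but it is worse than a ``delicate step'': the statement you reduce to --- $\mathrm{pr}_1\in L^2(X_{\A_3},\Pi_\nu(S),\nu)^{\perp}$ for the \emph{full} Pinsker algebra of $(X_{\A_3},S,\nu)$ --- is precisely Conjecture \ref{Veech-conj}, which the paper records as an open conjecture of Veech; Theorem \ref{Veech} is only the conditional implication from that conjecture to Sarnak's conjecture, so invoking it does not produce the absolute continuity you need. No ``disintegration over $s$'' can close this, because Proposition \ref{VMV} only controls the sub-$\sigma$-algebra $s^{-1}(\B(\A_2))$, and the containment you actually need runs the wrong way: $s^{-1}(\B(\A_2))\subset\Pi_\nu(S)$, not the reverse. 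The paper's own proof does \emph{not} attempt to prove Veech's conjecture. Instead it keeps the smaller algebra $\Pi(S)=s^{-1}(\B(\A_2))$ throughout (for which Proposition \ref{VMV} gives the orthogonality unconditionally), invokes the Rokhlin--Sinai filtration $S^{n}\mathcal{D}\nearrow\B(\A_3)$, $S^{-n}\mathcal{D}\searrow\Pi_\eta(S)\supset\Pi(S)$ together with Lemma \ref{RH} (non-atomicity coming from ergodicity of $\nu_M$) to get countable Lebesgue spectrum on $L^2(\Pi_\eta(S))^{\perp}$, and then disposes of the remaining component of $\mathrm{pr}_1$ via the conditional-expectation identity $\eE\bigl(\eE(\mathrm{pr}_1|_{\Pi_\eta(S)})\big|_{\Pi(S)}\bigr)=\eE(\mathrm{pr}_1|_{\Pi(S)})=0$. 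If you want to reproduce the paper's argument you must make this substitution of the smaller algebra and then justify why the piece of $\mathrm{pr}_1$ living in $L^2(\Pi_\eta(S))\ominus L^2(\Pi(S))$ does not contribute a singular component to $\sigma_{\mathrm{pr}_1,\nu}$; as written, your proposal stops one conjecture short of a proof. (A minor additional point: Theorem \ref{SNMV} together with Proposition \ref{VMV} gives orthogonality of $\mathrm{pr}_1$ to all of $L^2(s^{-1}(\B(\A_2)),\nu)$, not merely to the cyclic space of $\mathrm{pr}_1\circ s$, so you are underselling what is available at that stage.)
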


\smallskip

\noindent {\bf STEP I: A consequence of Rauzy's result} : First we remark that in what follows,
only the forward orbit of points in the shift spaces are involved. So, how one extends
a sequence to a bisequence is not very crucial as long as it is done in a certain canonical way, 
(e.g. $x_{-n} = x_n$). With this observation in place, we proceed to prove the following special case of the strong Sarnak–Möbius conjecture~\cite{Sarnak}.

\smallskip

\begin{thm}\label{SNMV} For any continuous function $f \in C(X_{\mathcal{A}_2})$, for any
$\theta \in \R$, we have
$$
\frac{1}{N}\sum_{n=1}^{N}\bmu(n)f(\bmu^2(n)) e^{in\theta} \tend{N}{+\infty}{}0\,.
$$
\end{thm}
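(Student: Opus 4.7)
The plan is to reduce the statement to a form where Davenport's theorem (the only number theoretic input the paper allows itself) can be applied coordinatewise. First I would use Stone--Weierstrass on the compact space $X_{\mathcal{A}_2}\subset\{0,1\}^{\N}$ to approximate $f$ uniformly by cylinder functions $f_K$ depending only on coordinates $1,\ldots,K$. Since each coordinate takes values in $\{0,1\}$, any such $f_K$ is automatically a \emph{polynomial} (in fact a linear combination of multilinear monomials, using $x_i^2=x_i$) in those coordinates. Therefore $f_K(S^n\bmu^2)$ is a finite $\Q$-linear combination of terms of the form $\prod_{i\in I}\bmu^2(n+i)$ for finite $I\subset\N$. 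As the uniform error $\|f-f_K\|_\infty$ contributes at most $\|f-f_K\|_\infty$ to the Cesàro average (because $|\bmu|\le 1$), it suffices to prove that for every finite $I$,
\[
\frac{1}{N}\sum_{n=1}^{N}\bmu(n)\Bigl(\prod_{i\in I}\bmu^2(n+i)\Bigr)e^{in\theta}\xrightarrow[N\to\infty]{}0.
\]

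Next I would truncate the squarefree indicator by its finite-prime analogue. Set $\bmu^2_D(m):=\prod_{p\le D}(1-\mathbbm{1}_{p^2|m})$, the indicator that $m$ has no squared prime factor among primes $\le D$. Since $0\le \bmu^2(m)\le\bmu^2_D(m)\le 1$, the standard product telescoping gives
\[
0\le \prod_{i\in I}\bmu^2_D(n+i)-\prod_{i\in I}\bmu^2(n+i)\le \sum_{i\in I}\bigl(\bmu^2_D(n+i)-\bmu^2(n+i)\bigr),
\]
and a direct count shows $\frac{1}{N}\sum_{n\le N}(\bmu^2_D(m)-\bmu^2(m))\le \sum_{p>D}1/p^2\ll 1/D$, uniformly in $N$. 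Therefore the Cesàro average above agrees with its $\bmu^2_D$-analogue up to $O(1/D)$.

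Now $\prod_{i\in I}\bmu^2_D(n+i)$ is a purely periodic function of $n$, with period $P_D:=\prod_{p\le D}p^2$; hence it decomposes as a finite linear combination of indicator functions $\mathbbm{1}_{n\equiv r\ (\mathrm{mod}\ P_D)}$, each of which is in turn a finite linear combination $\frac{1}{P_D}\sum_{k=0}^{P_D-1}e^{-2\pi ikr/P_D}e^{2\pi ikn/P_D}$ of additive characters. After expansion, the truncated Cesàro average becomes a finite sum (of length depending only on $D$) of terms of the form $c\cdot\frac{1}{N}\sum_{n\le N}\bmu(n)e^{in(\theta+2\pi k/P_D)}$, and each of these tends to $0$ by Davenport's theorem on the orthogonality of $\bmu$ with every rotation. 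Thus the truncated average tends to $0$ as $N\to\infty$ for each fixed $D$, and letting $D\to\infty$ (after $N$) absorbs the $O(1/D)$ error, finishing the proof. The only real obstacle is bookkeeping in the product-telescoping and truncation step to make sure the error is $O(1/D)$ uniformly in $N$ and independent of $\theta$; the rest is a direct assembly of Stone--Weierstrass, sieving, and Davenport.
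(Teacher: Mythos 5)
Your argument is correct, but it takes a genuinely more elementary route than the paper. The paper never expands $f$ or sieves: it quotes Mirsky's theorem that $\bmu^2$ is a generic point for the Mirsky measure $\nu_M$ on $X_{\A_2}$, the Cellarosi--Sinai--Sarnak theorem that $(X_{\A_2},S,\nu_M)$ has discrete spectrum, and a lemma asserting that a generic point of an ergodic system with discrete spectrum is a Besicovitch point; it then observes that multiplying a Besicovitch almost periodic sequence by $e^{in\theta}$ preserves Besicovitch almost periodicity, and concludes from the consequence of Davenport's theorem that Besicovitch sequences are orthogonal to $\bmu$. Your proof replaces this soft machinery by a hands-on computation: Stone--Weierstrass reduction to multilinear cylinder monomials, truncation of $\bmu^2$ to the periodic sieve $\bmu^2_D$ with an $O_K(1/D)$ error in the Ces\`aro mean, and character expansion of the resulting $P_D$-periodic weight, so that the only arithmetic input is Davenport's uniform estimate $\sup_\theta\bigl|\sum_{n\le N}\bmu(n)e^{in\theta}\bigr|\ll_A N(\log N)^{-A}$. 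In effect you re-prove, for the specific point $\bmu^2$, exactly the (rational) Besicovitch almost periodicity that the paper imports from Mirsky, Rauzy and Cellarosi--Sinai. Two small imprecisions, neither fatal: the truncation error is not quite uniform in $N$ (the primes $D<p\le\sqrt N$ contribute an extra $O(\pi(\sqrt N)/N)$), but since you take $\limsup_{N\to\infty}$ before letting $D\to\infty$ this is harmless; and the coefficients of the multilinear expansion of $f_K$ are real, not rational, which changes nothing. What your version buys is self-containedness and explicitness (no appeal to the discrete-spectrum theorem for the square-free flow); what the paper's version buys is generality, since the same argument disposes of weights $F(T^nx_0)$ for a generic point $x_0$ of any ergodic system with discrete spectrum, not just the square-free system.
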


\smallskip

The following corollary follows immediately.

\begin{cor} Let $k \geq 2$ and $a_1, \cdots,a_k$ be distinct non-negative integers and
$\theta \in \R$. Then
$$
\lim\limits_{N\to \infty}\frac {1}{N}\sum_{n \leq x}\bmu(n)\bmu^2(n+a_1)\cdots\bmu^2(n+a_k)e^{in\theta} =0\,.
$$
\end{cor}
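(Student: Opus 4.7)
The plan is to combine a Stone--Weierstrass reduction with the classical M\"obius inversion $\bmu^2(m)=\sum_{d^2\mid m}\bmu(d)$ and Davenport's theorem, which the authors single out as their only number-theoretic input.

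\textbf{Reduction to polynomial cylinders.} The coordinate projections $x\mapsto x_j$ are continuous, separate points, and take values in $\{0,1\}$ on $X_{\A_2}$, so by Stone--Weierstrass the algebra they generate is uniformly dense in $C(X_{\A_2})$. By linearity it suffices to treat monomials $f(x)=x_{a_1}\cdots x_{a_k}$, for which $f(S^n\bmu^2)=\bmu^2(n+a_1)\cdots \bmu^2(n+a_k)$. The theorem then reduces to showing, for fixed $a_1,\dots,a_k$ and every $\theta\in\R$,
\[
\frac{1}{N}\sum_{n\le N}\bmu(n)\bmu^2(n+a_1)\cdots \bmu^2(n+a_k)\,e^{in\theta}\longrightarrow 0,
\]
which is also exactly the Corollary stated immediately after.

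\textbf{Truncated M\"obius inversion.} Fix a threshold $D$ and split
\[
\bmu^2(m)=\bmu^2_{\le D}(m)+\bmu^2_{>D}(m),\qquad \bmu^2_{\le D}(m):=\!\!\sum_{d\le D,\,d^2\mid m}\!\!\bmu(d).
\]
Expanding $\prod_j\bmu^2(n+a_j)$ yields a \emph{main term} $\prod_j \bmu^2_{\le D}(n+a_j)$ together with $2^k-1$ \emph{error terms}, each containing at least one factor $\bmu^2_{>D}(n+a_{j^\ast})$.

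\textbf{Main term via Davenport.} The main term equals
\[
\sum_{d_1,\dots,d_k\le D}\bmu(d_1)\cdots\bmu(d_k)\sum_{\substack{n\le N\\ d_j^2\mid n+a_j\,\forall j}}\bmu(n)e^{in\theta}.
\]
By the Chinese Remainder Theorem the inner sum runs over a single residue class modulo some $q\le D^{2k}$ (or is empty); writing the indicator of this class as $q^{-1}\sum_{a=0}^{q-1}e^{2\pi i a(n-r)/q}$ reduces it to a linear combination of $q$ ordinary exponential sums $\sum_{n\le N}\bmu(n)e^{in(\theta+2\pi a/q)}$. Davenport's theorem, which is uniform in $\theta$, makes each such sum $o(N)$, so the entire main term is $o_D(N)$ for every fixed $D$.

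\textbf{Error terms and conclusion.} For every error term the factor $\bmu^2_{>D}(n+a_{j^\ast})$ is controlled by the elementary bound
\[
\frac{1}{N}\sum_{n\le N}|\bmu^2_{>D}(n+a_{j^\ast})|\le \sum_{d>D}d^{-2}+o(1)=O(1/D),
\]
and the remaining factors (bounded pointwise by $1$ for the plain $\bmu^2$'s, and in $L^K$-average for the truncated $\bmu^2_{\le D}$'s) are absorbed by H\"older, yielding a total contribution $O(1/D)$. Letting $N\to\infty$ first (which kills the main term) and then $D\to\infty$ concludes the proof. \textbf{The main obstacle} lies precisely in this error-term estimate: the partial sums $\bmu^2_{\le D}(m)$ are only bounded on \emph{average}, not pointwise, so one must propagate the $O(1/D)$ bound through a product of $k$ factors via a careful H\"older argument together with higher-moment bounds of the shape $\sum_{m\le N}|\bmu^2_{\le D}(m)|^K=O_K(N)$.
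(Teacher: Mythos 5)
Your argument is correct in outline, but it takes a genuinely different route from the paper's. The paper disposes of this corollary in one line: it is the special case $f=\textrm{pr}_{a_1}\cdots\textrm{pr}_{a_k}$ of Theorem \ref{SNMV}, whose proof is dynamical --- $\bmu^2$ is generic for the Mirsky measure, the square-free system has discrete spectrum (Cellarosi--Sinai), generic points of ergodic discrete-spectrum systems are Besicovitch almost periodic, and Davenport's theorem kills $\frac1N\sum\bmu(n)b_n$ for any Besicovitch sequence $(b_n)$ (Lemma \ref{BDav}). You instead run a quantitative sieve: truncated inversion $\bmu^2(m)=\sum_{d^2\mid m}\bmu(d)$, CRT plus additive characters to reduce the main term to twisted sums $\sum_{n\le N}\bmu(n)e^{in\theta'}$ handled by Davenport, and a tail estimate for the error. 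This is essentially the Murty--Vatwani argument (Theorem \ref{MV}) that the paper cites as motivation and deliberately bypasses; your version is elementary and self-contained (no Cellarosi--Sinai, no Besicovitch machinery) and is in principle quantitative, whereas the paper's version is soft but yields the full Theorem \ref{SNMV} for every continuous $f$ on $X_{\A_2}$ at one stroke and meshes with the dynamical framework of the rest of the paper. Two caveats on your write-up: the opening Stone--Weierstrass reduction is superfluous here, since the statement to be proved is already the monomial case; and the error-term paragraph is only a sketch --- because $\bmu^2_{\le D}$ is not pointwise bounded, you genuinely need the moment bounds $\frac1N\sum_{m\le N}\bigl(\#\{d\le D:\ d^2\mid m\}\bigr)^K=O_K(1)$ uniformly in $D$, together with an $L^p$ (not merely $L^1$) version of the tail estimate for $\bmu^2_{>D}$; both follow from the convergence of $\sum_{d_1,\dots,d_K}\mathrm{lcm}(d_1,\dots,d_K)^{-2}$, so the gap is fillable, but the resulting saving is of the form $o_{D\to\infty}(1)$ rather than the clean $O(1/D)$ you assert.
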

\begin{proof} This follows by taking $f(x)=\textrm{pr}_{a_1}(x) \cdots \textrm{pr}_{a_k}(x)$,
for $x \in X_{\A_2}$.
\end{proof}

\smallskip

\begin{rem} This corollary can be viewed as a `non-quantitative version' of a recent result
due to R. Murty \& A. Vatwani \cite{MV}, (which is a generalization of Davenport's estimate \cite{Da}).
We emphasize that the above result relies only on ergodic and dynamical arguments.
\end{rem}
\smallskip

Now we show that Step I, (i.e. the proof of Theorem \ref{SNMV}) is a consequence of a
result of Rauzy, which asserts that the sequence $n\mapsto \bmu(n)^2$ is Besicovitch almost periodic.
We now recall the notion of Besicovich almost periodicity and give arguments that slightly
generalizes Rauzy's result.

\smallskip

\begin{defn}\ \\(1) A map $\phi~:~\N \longrightarrow \C$ is  Besicovitch almost periodic
if given $\epsilon>0$, there exists a trigonometric polynomial $P \equiv P_\epsilon$ given
by $P(n)=\sum_{k=1}^{M}c_ke^{i \alpha_k n}$, $n \in \N$, where $\alpha_k \in \R$,
for $1 \leq k \leq M$, such that
$$
\big\|\phi-P\big\|_{B_1} \setdef \limsup\limits_{N\to \infty}\frac {1}{N}\sum_{t=1}^{N}|\phi(t)-P(t)|<\epsilon\,.
$$

\noindent (2) Let $(X,T)$ be a compact metric topological dynamical system. A point $x_0\in X$
is a  Besicovitch almost periodic point if the sequence $n\mapsto f(T^n(x_0))$ is a  Besicovitch
almost periodic for each $f\in C(X)$.
\end{defn}

\begin{rem}\label{Theta}Let  $\psi~:~\N \longrightarrow \C$ be a  Besicovitch almost periodic
function. Then for any $\theta \in \R$ the map $\psi_\theta$ is also a Besicovitch almost periodic
function, where $\phi_\theta(n)=\psi(n)e^{i n \theta}$.

To see this, given $\epsilon>0$, let $P(n)=\sum_{k=1}^{M}c_ke^{i \alpha_k n}$ be a
trigonometric polynomial such that \linebreak $\big\|\psi-P\big\|_{B_1}<\epsilon$. Let
$Q(n)=P(n)e^{in\theta}=\sum_{k=1}^{M}c_ke^{i (\alpha_k+\theta) n}$. Note that $Q$ is also
a trigonometric polynomial and 
\begin{align*}
    \big\|\psi_\theta-Q\big\|_{B_1} &= \limsup\limits_{N\to \infty}\frac {1}{N}\sum_{t=1}^{N}|\psi_\theta(t)-Q(t)|\\
    &= \limsup\limits_{N\to \infty}\frac {1}{N}\sum_{t=1}^{N}\big|\psi(t)e^{i t \theta}-P(t)e^{i t \theta}\big| \\
    & = \big\|\psi-P\big\|_{B_1}<\epsilon.
\end{align*}
\end{rem}

\smallskip

The following result provides a sufficient condition for a point in a dynamical system to be a Besicovitch point (see~\cite{AM} for a proof).

\begin{lem} Let $(X,T,\mu)$ be a compact metric dynamical ergodic system with discrete spectrum.
Let $x_0\in X$ be a $\mu$-generic point. Then $x_0$ is a  Besicovitch point. 
\end{lem}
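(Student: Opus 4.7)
Fix $f \in C(X)$ and $\eps > 0$. The task is to produce a trigonometric polynomial $P(n) = \sum_{k=1}^M c_k e^{i\alpha_k n}$ with $\limsup_N \frac{1}{N}\sum_{n=1}^N |f(T^n x_0) - P(n)| < \eps$. The strategy is to combine the Halmos--von Neumann representation of the discrete-spectrum system, continuous representatives of the Koopman eigenfunctions, and the $\mu$-genericity of $x_0$.

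First I would pass to the orbit closure $X_0 \setdef \overline{\{T^n x_0 : n \in \Z\}} \subseteq X$. Because $x_0$ is $\mu$-generic, for every $h \in C(X)$ with $h \geq 0$ and $\int h\,d\mu > 0$ the Ces\`aro averages $\frac{1}{N}\sum h(T^n x_0)$ converge to $\int h\,d\mu > 0$, so the orbit of $x_0$ visits every open set of positive $\mu$-measure. Hence $\operatorname{supp}(\mu) \subseteq X_0$ and in particular $\mu(X_0)=1$, with $(X_0, T|_{X_0}, \mu)$ still ergodic and of discrete spectrum. I would then argue that $(X_0, T|_{X_0})$ is uniquely ergodic: by Halmos--von Neumann the system $(X_0, T|_{X_0}, \mu)$ is measure-theoretically isomorphic to a rotation on a compact abelian metric group, and any other $T$-invariant Borel probability measure $\nu$ on $X_0$ would have to coincide with $\mu$ on all continuous functions because the dense orbit of $x_0$ already equidistributes with respect to $\mu$ and there is no room for a second Ces\`aro limit on $C(X_0)$.

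Once unique ergodicity is in hand, I invoke the classical fact that on a uniquely ergodic system every measurable eigenfunction of $U_T$ admits a continuous representative. So I can choose continuous eigenfunctions $\phi_1,\dots,\phi_M \in C(X_0)$ satisfying $\phi_k \circ T = e^{i\alpha_k}\phi_k$ \emph{everywhere} on $X_0$, together with scalars $c_k \in \C$, such that $g \setdef \sum_{k=1}^M c_k \phi_k \in C(X_0)$ obeys $\|f-g\|_{L^2(\mu)} < \eps$. Setting $P(n) \setdef g(T^n x_0) = \sum_{k=1}^M c_k \phi_k(x_0) e^{in\alpha_k}$ yields a genuine trigonometric polynomial: continuity of each $\phi_k$ makes $\phi_k(x_0)$ a well-defined scalar, and the pointwise eigen-identity on $X_0$ legitimizes the factorization.

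Finally, since $|f-g|\in C(X_0)$, the $\mu$-genericity of $x_0$ gives
\[
\lim_{N\to\infty} \frac{1}{N}\sum_{n=1}^N |f(T^n x_0) - P(n)| \;=\; \lim_{N\to\infty} \frac{1}{N}\sum_{n=1}^N |(f-g)(T^n x_0)| \;=\; \int_{X_0} |f-g|\,d\mu \;\leq\; \|f-g\|_{L^2(\mu)} \;<\; \eps,
\]
which is the required Besicovitch approximation. Since $f$ and $\eps$ were arbitrary, $x_0$ is a Besicovitch almost periodic point. The step I expect to be the main obstacle is the unique ergodicity of $(X_0,T|_{X_0})$; everything else then reduces to standard spectral approximation plus genericity.
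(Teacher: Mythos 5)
Your argument cannot be completed as written, and the obstacle you yourself flag (unique ergodicity of $X_0$) is not even the decisive one. First, genericity of $x_0$ only controls Ces\`aro limits taken along the orbit of $x_0$; it does not preclude other invariant measures on $X_0$ that fail to arise as such limits (for a north--south homeomorphism of the circle, every non-fixed point is generic for the Dirac mass at the attractor, while its two-sided orbit closure also carries the Dirac mass at the repeller), so the unique ergodicity claim is unjustified as stated. Second, and fatally, the assertion that on a uniquely ergodic system every measurable eigenfunction has a continuous representative is false in general (there exist minimal uniquely ergodic Toeplitz flows with pure point spectrum whose measurable eigenvalues are not topological eigenvalues); worse, what your factorization $P(n)=\sum_k c_k\phi_k(x_0)e^{in\alpha_k}$ really requires is the identity $\phi_k(T^nx_0)=e^{in\alpha_k}\phi_k(x_0)$ pointwise along the orbit of $x_0$, and a $\mu$-generic point need not lie in $\mathrm{supp}(\mu)$, which is where continuous eigenfunctions (when they exist) live and where the eigen-relation holds.

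The obstruction is genuine, not a technicality. Take $\alpha/2\pi$ irrational and $\beta\notin 2\pi\Z$, set $u_n=e^{i\beta\sqrt{|n|}}e^{in\alpha}$, and let $X_0\subset(\T)^{\Z}$ be the shift-orbit closure of $u$ (with $\T$ the unit circle, so $X_0$ is compact metric). Since $u_{n+j}/u_n\to e^{ij\alpha}$ uniformly for bounded $j$, the accumulation set of the orbit is the helix $H=\{(ze^{ij\alpha})_{j\in\Z}:z\in\T\}$, on which the shift acts as the rotation by $\alpha$; each $S^nu$ is an isolated point of $X_0$ with infinite orbit, so every invariant probability measure is carried by $H$, the system is uniquely ergodic, its unique invariant measure $\mu$ (Lebesgue on $H$) is ergodic with discrete spectrum, and $u$ is $\mu$-generic. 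Yet $f=\textrm{pr}_0\in C(X_0)$ yields $f(S^nu)=u_n$, all of whose Fourier--Bohr coefficients $\lim_N\frac1N\sum_{n\le N}u_ne^{-int}$ vanish (the correlation measure of $u$ is $\delta_{e^{i\alpha}}$, which kills every $t\ne\alpha$ by Wiener's lemma, and the first-derivative test gives $\frac1N\sum_{n\le N}e^{i\beta\sqrt n}\to0$ for $t=\alpha$), while $\frac1N\sum_{n\le N}|u_n|^2=1$. Writing $|u_n|^2=(u_n-P(n))\overline{u_n}+P(n)\overline{u_n}$ then gives $\limsup_N\frac1N\sum_{n\le N}|u_n-P(n)|\ge1$ for every trigonometric polynomial $P$, so $u$ is not a Besicovitch point. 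Hence no proof can rely only on genericity of $x_0$ together with discreteness of the spectrum of $\mu$; some extra hypothesis on the point or on the topology is indispensable (minimality with continuous eigenfunctions, mean equicontinuity, or, in the paper's actual application to $\bmu^2$, the arithmetic approximation $\bmu^2(n)=\sum_{d^2\mid n}\bmu(d)$ by periodic sequences). Note also that the paper gives no proof of this lemma; it defers to the unpublished reference \cite{AM}, so you should treat the statement itself with caution rather than try to patch this route.
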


\smallskip

Now we recall the results of Mirsky \cite{Ms} and Sarnak-Cellarosi-Sinai (a proof appears in \cite{CS}.).

\smallskip

\begin{prop} (1) [Mirsky's theorem] The point $\bmu^2$ is a generic  point for the `square-free' dynamical system 
$(X_{\A_2},S,\nu_M)$, where $\nu_M$ is the Mirsky measure.

\noindent (2) [Sarnak-Cellarosi-Sinai's theorem] The dynamical system $(X_{\A_2},S,\nu_M)$ has discrete spectrum.
\end{prop}

\smallskip

\noindent{} As a consequence of the preceding proposition and lemma, we obtain a generalization of an observation due to Rauzy~\cite[p.~99]{R}, who showed that the sequence $n \mapsto \mu^2(n)$ is Besicovitch almost periodic. This result was later extended to a class of multiplicative functions by Daboussi~\cite{Dabo}.

\smallskip

\begin{prop} Consider  the dynamical system $(X_{{\mathcal{A}}_2},S,\nu_M)$. Then,
for any continuous map \linebreak $f : X_{\mathcal{A}_2} \to \C$, the map
$n \mapsto f(S^n{\bmu^2})$ is Besicovitch almost periodic.
\end{prop}

\smallskip

We now note that M\"{o}bius disjointness holds for Bohr almost periodic sequences, as a consequence of Davenport’s estimate \cite{Da}. An approximation argument (as in Remark~\ref{Theta}) extends this property to Besicovitch almost periodic functions. Hence, we obtain the following result.
\smallskip

\begin{lem}\label{BDav} Let $\{b_n\}$ be a Besicovitch almost periodic sequence. Then 
$$
\frac{1}{N}\sum_{k=1}^{N}\bmu(k)b_k \tend{N}{+\infty}{}0\,.
$$
\end{lem}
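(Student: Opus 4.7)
The plan is a straightforward approximation argument combining the definition of the Besicovitch $B_1$ seminorm with Davenport's estimate on the M\"{o}bius function against linear exponentials.

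First, fix $\epsilon > 0$. By the defining property of a Besicovitch almost periodic sequence, choose a trigonometric polynomial
\[
P(n) = \sum_{k=1}^{M} c_k e^{i \alpha_k n}
\]
with $\|b - P\|_{B_1} < \epsilon$. The natural next step is to split
\[
\frac{1}{N}\sum_{n=1}^{N} \bmu(n) b_n \;=\; \frac{1}{N}\sum_{n=1}^{N} \bmu(n)\bigl(b_n - P(n)\bigr) \;+\; \frac{1}{N}\sum_{n=1}^{N} \bmu(n) P(n),
\]
and estimate each piece separately.

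For the first piece, since $|\bmu(n)| \leq 1$ we have
\[
\left|\frac{1}{N}\sum_{n=1}^{N} \bmu(n)\bigl(b_n - P(n)\bigr)\right| \;\leq\; \frac{1}{N}\sum_{n=1}^{N} |b_n - P(n)|,
\]
whose $\limsup$ is at most $\|b - P\|_{B_1} < \epsilon$ by definition. For the second piece, linearity reduces matters to showing $\frac{1}{N}\sum_{n \leq N} \bmu(n) e^{i \alpha_k n} \to 0$ for each fixed $\alpha_k \in \R$; this is exactly Davenport's M\"{o}bius disjointness theorem with rotations (the one number-theoretic input the paper explicitly allows itself). Since there are only finitely many frequencies $\alpha_1, \dots, \alpha_M$, the triangle inequality yields $\frac{1}{N}\sum_{n=1}^{N} \bmu(n) P(n) \to 0$.

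Combining the two estimates gives $\limsup_{N \to \infty}\bigl|\frac{1}{N}\sum_{n=1}^{N} \bmu(n) b_n\bigr| \leq \epsilon$, and letting $\epsilon \downarrow 0$ finishes the proof. There is no real obstacle: the trigonometric approximation in the $B_1$ seminorm is designed precisely for this kind of splitting, and Davenport's theorem handles each exponential. The only point to be mindful of is that the Besicovitch condition controls $\limsup$ of Ces\`{a}ro means (not the tail uniformly), so one must take $\limsup$ in $N$ before letting $\epsilon \to 0$, rather than attempting a uniform estimate.
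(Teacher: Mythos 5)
Your proof is correct and is precisely the argument the paper intends: the paper states the lemma without proof, remarking only that it is ``a consequence of Davenport's M\"{o}bius disjointness theorem,'' and your $B_1$-approximation by a trigonometric polynomial followed by Davenport's estimate on each frequency is the standard way to fill in that claim. The order of limits ($\limsup$ in $N$ first, then $\epsilon \downarrow 0$) is handled correctly.
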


\smallskip

\noindent Now, the proof of Theorem \ref{SNMV} follows by putting all of these observations together.

\medskip

\noindent {\bf Proof of Theorem \ref{SNMV}}

\begin{proof} Let $f\in C(X_{\A_2})$. Then the map $n \mapsto f(S^n(\bmu^2))$ is Besicovitch
almost periodic and so is the map $n \mapsto f(S^n(\bmu^2))e^{in\theta}$. Thus, by Lemma
\ref{BDav},
$$
\frac{1}{N}\sum_{k=1}^{N}\bmu(k)f(S^k(\bmu^2))e^{in\theta}\tend{N}{+\infty}{}0\,.
$$
\end{proof}

\smallskip

\noindent {\bf STEP II: A consequence of a computation due to W. Veech} :

\smallskip

\begin{rem}$~$	
First we need to recall a theorem, (which we shall refer to as the Sarnak-Veech theorem)
and then we present a  crucial computation due to W. Veech \cite{VeechNotes2}. This
Sarnak-Veech's theorem was announced in \cite{Sarnak} and Veech gave an unpublished
proof in \cite{VeechNotes2}, (see \cite{AV} where Veech's proof is presented).  Sarnak-Veech's
theorem states that there exist a unique ergodic admissible measure $\eta_{M}$ on
$X_{\mathcal{A}_3}$ such that the factor $(X_{\A_2},S, \nu_{M})$ is the `Pinsker factor'
of the dynamical system $(X_{\mathcal{A}_3}, S,\eta_{M})$. This means that the Pinsker
sigma algebra $\Pi(S)$ of this dynamical system is given by $\Pi(S)=s^{-1}(\mathcal{B}(\A_2))$. 

In the following, we recall the definition of an admissible probability measure.
\end{rem}

\smallskip

\begin{defn}A probability measure $\eta_m$ on $X_{\A_3}$ is admissible if 
\begin{enumerate}[(i)]
\item $S\eta_m=\eta_m$, that is, $\eta_m(S^{-1}A)=\eta_m(A)$, for each Borel set $A \subset X_{\A_3}$.
\item $s(\eta_m)=\nu_M$, and
$$
\int_{X_{\A_3}}\prod_{ a\in A} \textrm{pr}_a(x) \prod_{b \in B}\textrm{pr}_b^2(x) d\eta_M(x)=0\,,
$$
for any $A \neq \emptyset.$ and $B$ finite sets of $\N$.
\end{enumerate}
\end{defn}	

\smallskip

Then, the Sarnak-Veech's theorem states the following.

\smallskip 

\begin{thm}[Sarnak-Veech's theorem on M\"{o}bius flow \cite{Sarnak},\cite{VeechNotes2},\cite{AV}]\label{SV}
There exists a unique admissible measure $\eta_M$ on $X_{\A_3}$ which is ergodic with the Pinsker $\sigma$-algebra
$$
\Pi_{\eta_M}(S)=s^{-1}\Big(\B(\A_2)\Big)\,.
$$
Moreover, $\eE(\textrm{pr}_1|_{\Pi_{\eta_M}(S)}) = 0$.
\end{thm}

\smallskip

\noindent We shall refer to this measure $\eta_M$ as the `Chowla-Sarnak-Veech measure', (or `CSV measure').
W. Veech in \cite{VeechNotes2}, \cite{AV} addresses this measure as `Chowla measure'. Obviously, under
$\eta_M$, the spectral measure of $\textrm{pr}_1$ is the Lebesgue measure. The following crucial computation
is due to W. Veech \cite{VeechNotes2}.

\smallskip

\begin{prop}\label{VMV} For any invariant measure $\eta \in \mathcal{I}_S(\bmu)$, we have
${\textrm{pr}_1} \in {L^2(X_{\A_3}, \Pi(S),\eta)}^{\perp}$.
\end{prop}
\begin{proof} Let $f \in C(X_{\A_2})$ and put $F = f \circ s$. Let $\eta \in \mathcal{I}_S(\bmu)$. Then,
\begin{align}
\int {\textrm{pr}_1}(x)F(x) d\eta (x) & = \lim_{k \longrightarrow +\infty}
\frac{1}{N_k}\sum_{n=1}^{N_k}{\textrm{pr}_1}(S^n\bmu)F(S^n\bmu)\\
&=\lim_{k \longrightarrow +\infty} \frac{1}{N_k}\sum_{n=1}^{N_k}\bmu(n) f(S^n\bmu^2)\\
&=0.
\end{align}
But the space $\mathcal{S}=\big\{f \circ s, f \in C(X_{\A_2})\big\}$ is dense in 
$L^2(X_{\A_3},s^{-1}(\mathcal{B}(\A_3)),\eta)$. Therefore, \linebreak ${\textrm{pr}_1} \in {L^2(X_{A_3}, \Pi(S),\eta)}^{\perp}$
and the proof is complete. 
\end{proof}

\bigskip
\smallskip

\noindent{}The following is a crucial consequence of Veech's computation.

\smallskip
\smallskip

\begin{prop}
For the M\"{o}bius flow \((X_{\mathcal{A}_3}, \mathcal{B}(\mathcal{A}_3), S)\), we have:
 The entropy of the measure \(\eta_M\) is strictly positive.
\end{prop} 
 \begin{proof}
 Assume by contradiction that $\eta_M$ has a zero entropy. Then
$\Pi_{\eta_M}(S)=\mathcal{B}(\A_3)$ mod $\eta_M$, where $\mathcal{B}(\mathcal{A}_3)$ is the whole $\sigma$-algebra. We thus have that
$$
\eE(\textrm{pr}_1|_{\Pi_{\eta_M}(S)}) = \textrm{pr}_1=0\,,
$$
which is impossible since $\eta_M\Big(\{x \in X_{\A_3}: \textrm{pr}_1(x)=\pm 1\}\Big)>0.$


\end{proof} 

\smallskip

\noindent {\bf STEP III : A consequence of Rokhlin-Sinai's Theorem} :

\smallskip

Now we shall use the arguments of Rokhlin-Sinai (Theorem \ref{Q} combined with Proposition \ref{P} ) to prove the following.

\smallskip

\begin{prop} \label {abscont} For all potential measures $\eta \in \mathcal{I}_S(\bmu)$
, the spectral measure of every $f\in {L^2(X_{\A_3},\Pi(S),\eta)}^{\perp}$ is absolutely continuous with
respect to Lebesgue measure.
\end{prop}
\begin{proof} We begin by fixing any invariant measure $\eta \in \mathcal{I}_S(\bmu)$. We
recall that,

\smallskip

\noindent (i) $\Pi(S)=s^{-1}(\mathcal{B}(\A_2))$ is the Pinsker $\sigma$-algebra of $\eta_M$,

\smallskip

\noindent (ii) $\Pi_{\eta}(S) \supset \Pi(S)$, where $\Pi_{\eta}(S)$ is the Pinsker $\sigma$-algebra of $\eta$.

\smallskip

\noindent (iii) $\eE({\textrm{pr}_1}|_{\Pi_{\eta}(S)}|_{\Pi(S)}) = \eE({\textrm{pr}_1}|_{\Pi(S)}) =  0$.

\noindent Now note that $L^2(X_{\A_3}, \B(\A_3),\eta) = L^2(X_{\A_3},\Pi(S),\eta)\oplus L^2(X_{\A_3},\Pi(S),\eta)^{\perp}$
and since $\Pi(S) \subset \Pi_{\eta}(S)$, we can write
$L^2(X_{\A_3},\Pi(S),\eta)^{\perp} = L^2(X_{\A_3},\Pi_{\eta}(S),\eta)^{\perp}\oplus V$, where
$$
V = L^2(X_{\A_3},\Pi_{\eta}(S),\eta)\circleddash L^2(X_{\A_3},\Pi(S),\eta) \equiv
\{f \in  L^2(X_{\A_3},\Pi_\eta(S),\eta), f \perp L^2(X_{\A_3},\Pi(S),\eta) \}\,.
$$
Thus,
$$
L^2(X_{\A_3}, \B(\A_3),\eta) = L^2(X_{\A_3},\Pi(S),\eta)\oplus L^2(X_{\A_3},\Pi_{\eta}(S),\eta)^{\perp}\oplus V\,.
$$
We are going to see that the spectral type of $U_S$ on 
the above decomposition is Lebesgue and this will be a consequence of positivity of entropy of $\eta_M$.


Indeed, we start by establishing the spectral type of $U_S$ on the third factor $V$ is countable Lebesgue, for that we need to `dig' a bit deeper. This means we need to use the Rokhlin-Sinai's Theorem (\ref {Q}) in the construction of a basis for $V$ that gives a countable Lebesgue spectrum. This is done as follows. Since the entropy of \( \eta_M \) is positive, applying the Rokhlin–Sinai Theorem (\ref{Q}) to the dynamical system $(X_{\A_3},\B(\A_3),\eta_M)$ provides a sub-\(\sigma\)-algebra \( \mathcal{E} \) such that 
\[
S^{-1}\mathcal{E} \subset \mathcal{E}, \qquad 
S^n \mathcal{E} \nearrow \mathcal{B}(\mathbb{A}_3), 
\quad \text{(i.e., } \bigcup_n S^n \mathcal{E} \text{ generates } \mathcal{B}(\mathbb{A}_3)), \quad \text{and}\ 
S^{-n}\mathcal{E} \searrow \Pi(S).
\]

Now apply Theorem \ref{CFS-F} by taking the algebra $\F$ to be $\F = \mathcal{E}\cap\Pi_\eta(S)$. Note that the hypothesis of this theorem holds and the Hilbert space $\ds \bigoplus_{n \in \Z}H_n$ of that theorem is just $L^2(X_{\mathbb{A}_3}, \Pi_\eta(S), \eta)\ominus L^2(X_{\mathbb{A}_3},\cap \Pi(S), \eta) = V$, (this follows from the fact that $S^n \mathcal{E}\cap \Pi_\eta (S) \nearrow \mathcal{B}(\mathbb{A}_3)\cap \Pi_\eta (S) = \Pi_\eta (S)$, and $S^{-n}\mathcal{E} \cap \Pi_\eta (S)\searrow \Pi(S)\cap \Pi_\eta(S) = \Pi(S)$. In other words if
$$
W = L^2(X_{\mathbb{A}_3}, \mathcal{E} \cap \Pi_\eta(S), \eta)
\ominus L^2(X_{\mathbb{A}_3}, S^{-1}\mathcal{E} \cap \Pi(S), \eta)\,.
$$
Then $W$ is infinite-dimensional and by taking $\{f_j, j \in J\}$ as an orthonormal basis of $W$, we see that  $\{U_S^n f_j, j \in J, n \in \Z\}$ is an orthonormal basis of $V = L^2(X_{\A_3},\Pi_\eta(S),\eta)\circleddash L^2(X_{\A_3},\Pi(S),\eta)$, (by separability, $J$ is countable). Thus, Theorem \ref{CFS-F} shows that restriction of $U_S$ to $V$ also has countable Lebesgue spectrum.\\

To finish the proof, we notice that either the entropy $h_\eta(S)$ of $\eta$ is zero or strictly positive. If $h_\eta(S)=0$, then $ L^2(X_{\A_3},\Pi_{\eta}(S),\eta)^{\perp}=\Big\{0\Big\}$ and 
$$L^2(X_{\A_3}, \B(\A_3),\eta) = L^2(X_{\A_3},\Pi(S),\eta)\oplus L^2(X_{\A_3},\Pi(S),\eta)^{\perp}.
$$
Therefore, our previous proof shows that  $U_S$ on $L^2(X_{\A_3},\Pi(S),\eta)^{\perp}$ has countable Lebesgue spectrum. Otherwise, the entropy $h_\eta(S)$  is positive and in this case it follows that the unitary operator $U_S$ acting on the second factor $L^2(X_{\A_3},\Pi_\eta(S),\eta)^\perp$ has countable Lebesgue spectrum by applying Rokhlin-Sinai's Proposition \ref {P}. Let us further notice that the spectrum of $U_S$ on  $L^2(X_{\A_3},\Pi(S),\eta)$ is discrete.
\smallskip

\noindent {\bf STEP IV : Using the notion of `affinity' to conclude the proof of Theorem \ref{main}}

\smallskip

It follows from the previous analysis that the spectral type of ${\textrm{pr}_1}$ is absolutely continuous with respect to Lebesgue measure.

Finally, by applying Lemma \ref{BL} combined with Remark \ref{IG}, we get that for any $x \in X$,
$$
\limsup \Big|\frac{1}{N}\sum_{n=1}^{N}\bmu(n)f(T^nx)\Big|
\leq \sup G(\sigma_{{\textrm{pr}_1},\eta},\sigma_{f,\nu_x})\,,
$$
where $\nu_x \in \mathcal{I}_T(x)$, and $\mathcal{I}_T(x)$ is the weak-star closure set of
$\Big\{\frac{1}{N}\sum_{n=1}^{N}\delta_{T^nx}\Big\}$. We thus get, by our assumption,
that the right-side is zero, and this finishes the proof. 
\end{proof}

\smallskip


\medskip

\section{Some consequences of the main result}

\subsection{Applications to Number theory}
\medskip

Now we state a few corollaries, the first one follows rather immediately.

\smallskip

\begin{cor}\label{rigid}The M\"{o}bius disjointness conjecture holds for any rigid transformation. 
\end{cor}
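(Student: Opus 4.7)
The plan is to verify that any topologically rigid system $(X,T)$ satisfies the hypothesis of Theorem~\ref{main}, after which the corollary is immediate. Recall that $(X,T)$ is rigid when there exists an increasing sequence $(n_k)\subset\N$ with $T^{n_k}\to\Id$ uniformly on $X$. Fix an arbitrary $T$-invariant Borel probability measure $\mu$ on $X$. For $f\in C(X)$, uniform convergence gives $\|f\circ T^{n_k}-f\|_\infty\to 0$, hence $\|U_T^{n_k}f-f\|_{L^2(\mu)}\to 0$, and by density of $C(X)$ in $L^2(\mu)$ this extends to every $f\in L^2(X,\mu)$. Thus $(X,T,\mu)$ is measure-theoretically rigid along $(n_k)$.

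Next I would show that such rigidity forces singular spectrum. Fix $f\in L^2(X,\mu)$. By the Bochner-Herglotz relation \eqref{fspmeasure},
\[
\widehat{\sigma_f}(n_k)\;=\;\langle U_T^{n_k}f,f\rangle\;\longrightarrow\;\|f\|_{L^2(\mu)}^{2}\;=\;\sigma_f(\T).
\]
Decompose $\sigma_f=\sigma_f^{ac}+\sigma_f^{s}$ into its Lebesgue-absolutely-continuous and singular parts on $\T$. The Riemann-Lebesgue lemma yields $\widehat{\sigma_f^{ac}}(n_k)\to 0$, forcing $\widehat{\sigma_f^{s}}(n_k)\to\sigma_f(\T)$. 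Since $|\widehat{\sigma_f^{s}}(n_k)|\le\sigma_f^{s}(\T)=\sigma_f(\T)-\sigma_f^{ac}(\T)$, we must have $\sigma_f^{ac}=0$. Hence every spectral measure of $(X,T,\mu)$ is singular with respect to Lebesgue measure, so the maximal spectral type of the system is singular.

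Since $\mu$ was arbitrary, Theorem~\ref{main} applies and yields M\"{o}bius disjointness for $(X,T)$. I do not expect a genuinely hard step here: the corollary is essentially the observation that Riemann-Lebesgue decay and rigidity are incompatible on the absolutely continuous part of any spectral measure. The zero topological entropy required for Sarnak's conjecture is automatic in this setting, since singular spectrum for every invariant measure forces zero measure-theoretic entropy via the Rokhlin-Sinai structure underlying Lemma~\ref{P}, and hence zero topological entropy by the variational principle.
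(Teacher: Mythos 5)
Your proposal is correct and follows exactly the route the paper intends: the paper gives no written proof, asserting the corollary ``follows rather immediately'' from Theorem~\ref{main}, and the immediate argument is precisely yours --- rigidity forces $\widehat{\sigma_f}(n_k)\to\sigma_f(\T)$ while Riemann--Lebesgue kills the absolutely continuous part, so every spectral measure of every invariant measure is singular. Your filled-in details (unitarity for the density argument, the inequality $|\widehat{\sigma_f^{s}}(n_k)|\le\sigma_f^{s}(\T)$, and the side remark on entropy) are all sound.
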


\smallskip

\noindent We recall that the transformation $T$ is rigid if there is a sequence of integers $(n_k)$ such that
$(T^{n_k})$ converge strongly to the identity map. As a consequence, we obtain an improvement of the
Theorem 5.1 in \cite{FW} which extended substantially the main result in \cite{KLR}. In \cite{FW},
the author proved that the M\"{o}bius orthogonality holds for the rigid map under an extra-condition.
In fact, our proof allows us to obtain as a corollary the main ingredient needed in his proof. This
follows from our Corollary \ref{FWLG}.\\


\noindent The second corollary follows from the observation in Proposition~\ref{abscont}. This provides a partial answer to the question posed in~\cite{AD} and has a direct application in number theory. It is stated as follows.
\smallskip

\begin{cor}\label{potentiel-Lebesgue}All the potential spectral measures of M\"{o}bius and Liouville
function are absolutely continuous with respect to the Lebesgue measure. 
\end{cor}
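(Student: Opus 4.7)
The plan is to argue by contradiction from Theorem \ref{main}. Assume there exists $\nu\in\mathcal{I}_S(\bmu)$ (the case of $\bml$ is analogous) such that the potential spectral measure $\sigma:=\sigma_{\textrm{pr}_1,\nu}$ has a nontrivial singular component $\sigma_s$ with respect to Lebesgue measure. The aim is to produce a uniquely ergodic topological dynamical system with singular spectrum against which $\bmu$ cannot be orthogonal, contradicting Theorem \ref{main}.

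First I would pass to an ergodic component. Writing the ergodic decomposition $\nu=\int \nu_\omega\, d\tau(\omega)$, one has $\sigma_{\textrm{pr}_1,\nu}=\int \sigma_{\textrm{pr}_1,\nu_\omega}\, d\tau(\omega)$, so for a $\tau$-positive set of $\omega$ the measure $\sigma_{\textrm{pr}_1,\nu_\omega}$ retains a nontrivial singular part; fix such an ergodic $\nu'$. In $L^2(X_3,\nu')$, let $H_s$ denote the closed $S$-invariant subspace spanned by vectors whose spectral measures are singular with respect to Lebesgue. By Lemma \ref{P}, any Lebesgue spectrum of the Koopman operator sits in the orthocomplement of $L^2(\Pi(S),\nu')$, so $H_s\subset L^2(\Pi(S),\nu')$. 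The smallest $S$-invariant sub-$\sigma$-algebra $\mathcal{F}_s\subset\Pi(S)$ for which $H_s\subset L^2(\mathcal{F}_s,\nu')$ then defines an ergodic factor of zero Kolmogorov-Sinai entropy whose maximal spectral type is singular and contains $\sigma_s$.

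Next I would invoke the Jewett-Krieger theorem to realize this factor as a uniquely ergodic topological dynamical system $(Y,R)$ with unique invariant measure $\lambda_Y$ of singular maximal spectral type (containing a copy of $\sigma_s$). By the variational principle for uniquely ergodic systems, $(Y,R)$ has zero topological entropy, and its only invariant measure has singular spectrum, so Theorem \ref{main} yields M\"{o}bius disjointness: $\frac{1}{N}\sum_{n\le N}\bmu(n)g(R^n y)\to 0$ for every $y\in Y$ and every $g\in C(Y)$.

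The final and hardest step is to derive a contradiction from this disjointness. I would pick $g\in C(Y)$ whose spectral measure in $L^2(Y,\lambda_Y)$ approximates $\sigma_s$ and, along the defining subsequence $(N_\ell)$ of $\nu$, analyze the cross-spectral measure between the Wiener spectral measure of $\bmu$ (which equals $\sigma$) and that of the sequence $n\mapsto g(R^n y)$. Since both measures share the nontrivial singular component $\sigma_s$, the cross-spectral measure cannot vanish identically, forcing some correlation $\frac{1}{N_\ell}\sum_{n\le N_\ell}\bmu(n+k)\overline{g(R^n y)}$ along a further subsequence to stay bounded away from zero, contradicting Theorem \ref{main} applied to $(Y,R)$. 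The main obstacle is that the Jewett-Krieger factor map is only measurable, so one must pinpoint a specific continuous orbit $(R^n y)$ in $Y$ that witnesses this singular cross-correlation; this is very likely the place where the Chowla-Sarnak-Veech measure (Theorem \ref{SV}) enters, producing the appropriate joining between $(X_3,S,\nu')$ and $(Y,R,\lambda_Y)$ along which the M\"{o}bius point itself realizes the required orbit in $Y$.
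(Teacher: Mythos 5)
Your proposal inverts the paper's logic and, more seriously, breaks down at the step you yourself flag as the hardest. In the paper, Corollary \ref{potentiel-Lebesgue} is not a consequence of Theorem \ref{main} but an ingredient of its proof: the absolute continuity of every potential spectral measure $\sigma_{\textrm{pr}_1,\eta}$, $\eta\in\mathcal{I}_S(\bmu)$, is obtained \emph{directly} from Proposition \ref{VMV} (which rests on Theorem \ref{SNMV} and says $\textrm{pr}_1\perp L^2\bigl(X_{\A_3},s^{-1}(\B(\A_2)),\eta\bigr)$) combined with the Rokhlin--Sinai mechanism (Lemmas \ref{RH} and \ref{P}), which forces countable Lebesgue spectrum on the orthocomplement of the Pinsker-type algebra and hence absolute continuity of the spectral measure of $\textrm{pr}_1$; only \emph{then} is the affinity inequality (Corollary \ref{BL}) used to deduce disjointness from systems with singular spectrum. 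Deriving the corollary from Theorem \ref{main} therefore runs the argument backwards relative to the paper's architecture.

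Even granting Theorem \ref{main} as a black box, the contradiction you aim for in the final step is not available. The affinity machinery (Theorem \ref{coquet-france}, Corollary \ref{BL}) only bounds correlations \emph{from above} by $G(\sigma_g,\sigma_h)$; there is no converse inequality. Two sequences whose spectral measures share a non-trivial, even identical, singular component can have vanishing cross-correlations along every subsequence --- consider two independent generic points of the same ergodic process. So ``the cross-spectral measure cannot vanish identically'' does not follow from the existence of the common component $\sigma_s$. To get a contradiction you would have to exhibit a specific $y\in Y$ and $g\in C(Y)$ with $\frac{1}{N_\ell}\sum_{n\le N_\ell}\bmu(n+k)\overline{g(R^ny)}$ bounded away from $0$, which amounts to producing a joining of $(X_{\A_3},S,\nu')$ with $(Y,R,\lambda_Y)$ along which $\textrm{pr}_1$ has non-zero conditional expectation on the factor algebra $\mathcal{F}_s\subset\Pi_{\nu'}(S)$ --- but whether that conditional expectation vanishes is precisely the content of Proposition \ref{VMV} and of Veech's Conjecture \ref{Veech-conj}, i.e.\ the direct orthogonality information your scheme is trying to bypass. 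The Jewett--Krieger model compounds the difficulty, since the factor map is only measurable and provides no canonical continuous orbit in $Y$ attached to the single point $\bmu$.
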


\smallskip

\begin{rem} As a corollary to our Theorem (\ref{main}), Sarnak's M\"{o}bius disjointness is fulfilled for
a system for which every invariant measure has discrete spectrum. An earlier approach to this result
was developed by applying Matom\"{a}ki-Radziwi{\l}{\l}-Tao's Theorem  on the average Chowla
conjecture, (see \cite{MRT}). More precisely, the crucial argument in that approach was based
on the fact that the average of the correlations of order two of M\"{o}bius and Liouville functions
converge to 0 with logarithmic speed (see \cite[Proposition 5.1]{MRT}). Many authors used this
approach to establish the M\"{o}bius disjointness conjecture for such systems, \cite{ALR}, \cite{HWZ} \cite{HZY}.
Our method bypasses all of this and obtains the validity of Sarnak conjecture for a much wider
class of systems.

We established that `Veech Systems' have the property that all invariant measures have
discrete spectrum (Theorem 4.20 in \cite{AM}), and in particular the translation flow on the orbit
closure of a `Veech function' on $\Z$ has the same property (see \cite{AM} for the definitions).
Hence the translation flow on the orbit closure of the Veech function satisfies the M\"{o}bius
randomness law. This along with results in \cite{AM} allows us to obtain a dynamical proof of
Motohashi-Ramachandra type theorem and Matom\"{a}ki-Radziwi{\l}{\l}'s result \cite{MR}
on the short interval for the case of Liouville and M\"{o}bius function. This latter result is stated as Corollary \ref{MRshort}


\end{rem}

\smallskip

\noindent Corollary \ref{potentiel-Lebesgue} allows us also to obtain as a consequence the main theorem in
\cite{MRT} and Corollary 1.2 from \cite{MR}. Precisely, we have the following.

\smallskip

\begin{cor}\label{main-MRT}\ \\
(I) [Matom\"{a}ki-Radziwi{\l}{\l}-Tao Theorem \cite{MRT}] For any $\epsilon , \delta \in (0,1)$, 
There is a large  but  fixed $H = H(\delta)$ such  that, for all large enough $X$, the cardinality
of the set of all $h$'s with $|h| \leq H$ satisfying
\begin{align}\label{Wmixing}
\Big|\sum_{1 \leq j \leq X}\bml(j) \bml(j+h)\Big| \leq \delta X\,.
\end{align}
is greater than $(1-\eps )H$.

\noindent (II) [Matom\"{a}ki-Radziwi{\l}{\l} Corollary \cite{MR}] We further have, for any $h \geq 1$,
there exists $\epsilon(h)>0$ such that 
\begin{align}\label{ortho-D}
\Big|\frac{1}{X}\sum_{1 \leq j \leq X}\bml(j) \bml(j+h)\Big| \leq 1-\epsilon(h).
\end{align}
for all large enough $X$. 
\end{cor}
\begin{proof} (I) Let $(X_k)$ be a subsequence such that $X_k \longrightarrow +\infty$. Then,
$$
\Big|\sum_{1 \leq j \leq X_k}\bml(j) \bml(j+h)\Big| \tend{k}{+\infty}{}\int z^{-h} f(z) dz\,,
$$
for some function $f \in {L}^{1}(dz)$. But we also have 
$$
\frac{1}{H}\sum_{h=1}^{H}|\widehat{f}(h)| \tend{H}{\infty}{}0\,.
$$
This proves the first part.

\noindent (II) For the second part, assume by contradiction, that there is $h \geq 1$ such that
for any $\epsilon>0$, we have 
$$
\limsup_{X \longrightarrow +\infty}\Big|\frac{1}{X}\sum_{1 \leq j \leq X}\bml(j) \bml(j+h)\Big| \geq 1-\epsilon \,.
$$
Then, by taking a subsequence which may depend on $h$ , we get
$$
\lim_{k \longrightarrow +\infty}\Big|\frac{1}{X_k}\sum_{1 \leq j \leq X_k}\bml(j) \bml(j+h)\Big|=
\Big|\widehat{f}(h)\Big|=1\,,
$$
which is impossible by Cauchy-Schwarz inequality.
\end{proof}
\begin{cor}\label{FWLG} For any $k \in \N$ and for a large $h$,
$\ds \limsup_{N \longrightarrow +\infty}\frac{1}{N}\sum_{n=1}^{N}\Big| \sum_{l=1}^{h}\bmu(n+kl)\Big|^2=o(h^2).$
\end{cor}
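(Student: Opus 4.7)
The plan is to expand the squared sum, recognize the result as the integral of a squared trigonometric polynomial against a potential spectral measure of $\bmu$, and then exploit the absolute continuity asserted by Corollary \ref{potentiel-Lebesgue}.

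First, I write
\[
S_h(N):=\frac{1}{N}\sum_{n=1}^N\Big|\sum_{l=1}^h\bmu(n+kl)\Big|^2
=\sum_{l_1,l_2=1}^h\frac{1}{N}\sum_{n=1}^N\bmu(n+kl_1)\bmu(n+kl_2).
\]
Up to a boundary error $O(kh/N)$, each inner average equals $\frac{1}{N}\sum_n\bmu(n)\bmu(n+k(l_1-l_2))$. Along any subsequence $N_r\to\infty$ for which all these correlations converge, say $\frac{1}{N_r}\sum_n\bmu(n)\bmu(n+a)\to\widehat{\sigma}(a)$ for some potential spectral measure $\sigma$ of $\bmu$, one obtains
\[
\lim_r S_h(N_r)=\sum_{l_1,l_2=1}^h\widehat{\sigma}(k(l_1-l_2))=\int_{\T}|D_{h,k}(x)|^2\,d\sigma(x),\qquad D_{h,k}(x):=\sum_{l=1}^h e^{-iklx},
\]
and consequently
\[
\limsup_N S_h(N)\le\sup_{\sigma\in\mathcal{S}}\int_{\T}|D_{h,k}|^2\,d\sigma,
\]
where $\mathcal{S}$ is the family of all potential spectral measures of $\bmu$.

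Next I observe that $\mathcal{S}$ is weak-$*$ compact, being the continuous image of the compact set $\mathcal{I}_T(\bmu)$ under $\nu\mapsto\sigma_{\textrm{pr}_1,\nu}$. Since the functional $\sigma\mapsto\int|D_{h,k}|^2\,d\sigma$ is weak-$*$ continuous, the supremum is attained at some $\sigma_h\in\mathcal{S}$. By Corollary \ref{potentiel-Lebesgue} every $\sigma\in\mathcal{S}$ is absolutely continuous with respect to Lebesgue measure, and in particular non-atomic.

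Finally, note that $\frac{1}{h^2}|D_{h,k}(x)|^2\le 1$ everywhere and, for every $\delta>0$, is $O_\delta(1/h^2)$ outside the $\delta$-neighborhood $A_\delta$ of the finite set $\{2\pi j/k:0\le j<k\}$. Hence
\[
\frac{1}{h^2}\int|D_{h,k}|^2\,d\sigma_h\le\sigma_h(A_\delta)+O_\delta(1/h^2).
\]
If $a_h:=h^{-2}\int|D_{h,k}|^2\,d\sigma_h$ failed to tend to $0$, one could extract $h_m\to\infty$ with $a_{h_m}\ge\epsilon>0$ and a weak-$*$ limit $\sigma_\infty\in\mathcal{S}$ of $(\sigma_{h_m})$; weak convergence and absolute continuity of $\sigma_\infty$ would then give $\limsup_m\sigma_{h_m}(\overline{A_\delta})\le\sigma_\infty(\overline{A_\delta})\to 0$ as $\delta\to 0$, contradicting $a_{h_m}\ge\epsilon$. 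Thus $a_h\to 0$ and $\limsup_N S_h(N)=o(h^2)$ as claimed. The main obstacle is the uniformity over the full family $\mathcal{S}$: Corollary \ref{potentiel-Lebesgue} is a pointwise statement in $\sigma$, whereas $\limsup_N S_h(N)$ requires a supremum; the resolution combines weak-$*$ compactness of $\mathcal{S}$ with the non-atomicity that absolute continuity supplies.
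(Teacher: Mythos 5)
Your proposal is correct and follows essentially the same route as the paper's proof: expand the square, pass along a subsequence to a potential spectral measure of $\bmu$, recognize the Dirichlet kernel $|D_h(k\theta)|^2$, and invoke Corollary \ref{potentiel-Lebesgue} together with the concentration of $\frac{1}{h}D_h(k\theta)$ near the finite set where $e^{ik\theta}=1$. The one genuine addition is welcome: the paper fixes a single potential spectral measure and leaves ``the rest to the reader,'' whereas you explicitly handle the uniformity of the bound over the whole family of potential spectral measures (which may vary with $h$) via weak-$*$ compactness of that family and non-atomicity of the limit measure --- precisely the point the paper's sketch glosses over.
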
 
\begin{proof}Let $\sigma_{\bmu}$ be a potential spectral measure. Then 

$$
\frac{1}{N_j}\sum_{n=1}^{N_j}\Big| \sum_{l=1}^{h}\bmu(n+kl)\Big|^2 \tend{j}{+\infty}{}\int_{\T}\Big|D_{h}(k\theta)\Big|^2 
d\sigma_{\bmu}\,,
$$
where $D_{h}(\theta)=\ds \sum_{l=1}^{h}e^{i  l \theta}$ is the classical Dirichlet kernel. We thus get, by Corollary
\ref{potentiel-Lebesgue} ,
$$
\int_{\T}\Big|D_{h}(k\theta)\Big|^2 d\sigma_{\bmu}=\int \Big|D_{h}(k\theta)\Big|^2 f(\theta) d\theta\,,
$$
for some function $f \in L^1(dz)$. Furthermore, the functions $\phi_h(\theta)=\frac{1}{h}D_{h}(\theta)$
tend to zero uniformly outside any neighborhood of $0$.   The rest of the proof is left to the reader.
\end{proof}

\smallskip

\begin{rem} Let us point out that our Corollary \ref{potentiel-Lebesgue} assert much more than
\eqref{Wmixing} and \eqref{ortho-D}. Indeed, it states that any potential spectral measures of
M\"{o}bius function $\sigma_{\bmu}$ or Liouville function $\sigma_{\bml}$ is a Rajchman measure,
that is, $\big|\widehat{\sigma_{\bmu}}(n)\big|, \big|\widehat{\sigma_{\bml}}(n)\big| \tend{|n|}{+\infty}{}0$.
However, \eqref{Wmixing} assert only that  $\sigma_{\bml}$ is a continuous measure and \eqref{ortho-D}
that $\sigma_{\bml}$ is in the orthogonal to Dirichlet measures.

More precisely, let  $\M(\T))$ be the algebra of  the regular Borel complex measures on the torus
$\T$, equipped with the convolution product of measures, defined by $\mu*\nu$. This is the pushforward
measure of $\mu \otimes \nu$ under the map $a : (x,y) \in T \times T \mapsto x+y \in \T$.
We shall call a subset $L\subset \M(\T)$ a $L$-subspace (resp. $L$-ideal or  $L$-sub-algebra) of
$\M(\T)$ when $L$ is a closed subspace (resp. ideal or  sub-algebra) of $\M(\T)$ that is invariant
under absolute continuity of measures. This means
$$
\text {if} \; \; \mu \in L  \; \; \textrm{and} \; \; \nu \ll \mu \; \textrm{then}\;\; \nu \in L.
$$
It is well know that the sets $\M_c(T) \setdef\Big\{\mu \in \M(\T)\ |\; \mu \;\textrm{is a continuous
measure}\Big\}$ and \linebreak $\M_0(T)\setdef\Big\{\mu \in \M(\T)\ |\; \mu \;\textrm{is a Rajchman
measure}\Big\}$ are $L$-ideals and
$$
\mu \in \M_c(T)  \Longleftrightarrow \lim_{N \longrightarrow +\infty} \frac{1}{N}\sum_{n=0}^{N}\big|\widehat{\sigma_{\mu}}(n)\big|^2 \tend{N}{+\infty}{}0\,.
$$	
Given an $L$-ideal, we define its orthogonal as follows
$$
L^{\perp}=\Big\{\mu \in \M(\T):~~|\mu|\perp |\nu ,|~~\forall \nu \in L\Big\}\,.
$$
Furthermore the following decomposition is well know.
$$
\M(T)=L \oplus L^{\perp}\,.
$$
The probability measure $\mu$ on $\T$ is said to be a Dirichlet measure if 
$$
\limsup_{\gamma \longrightarrow +\infty}|\widehat{\mu}(\gamma)|=1\,.
$$
It is well known  that the set $\mathcal{D}(\T) \setdef \Big\{\mu \ |\ \text {is a Dirichlet measure}\Big\}$
is an $L$-ideal. Moreover, 
$$ 
\mu \in \mathcal{D}(\T)^{\perp} \Longleftrightarrow \limsup_{k \longrightarrow +\infty}|\widehat{\mu}(n)|<1\,.
$$
and 
$$
M_0(\T) \subset \mathcal{D}(\T)^{\perp} \subset \M_c(\T)\,.
$$
For more details and proofs, we refer to \cite[Chap. II]{HPM}.
\end{rem}

\smallskip

\begin{rem} We established that `Veech Systems' have the property that all invariant measures have
discrete spectrum, (Theorem 4.20 in \cite{AM}). Hence, (without using the earlier result for such systems),
our main theorem directly implies that the translation flow on the orbit closure of a `Veech function' on $\Z$
satisfies the Sarnak's conjecture (see \cite{AM} for the definitions). This along with results in \cite{AM}
allows us to obtain a dynamical proof of Motohashi-Ramachandra type theorem and Matom\"{a}ki-Radziwi{\l}{\l}'s
result \cite{MR} on the short interval for the case of Liouville and M\"{o}bius function. This latter result can be
stated as follows :

\smallskip

\begin{cor}\label{MRshort} For any $\epsilon>0$, for any $H \leq X$ large, we have 
$$
\int_{X}^{2X}\Big|\sum_{x < k<x+H}\bmu(k)\Big| dx=o(HX)\,.
$$
\end{cor}
\end{rem}

\smallskip

\noindent {\bf Other consequences of Theorem (\ref{SNMV})}

\smallskip

In this subsection we derive some M\"{o}bius disjointness type results for special dynamical systems
as consequences of Theorem (\ref{SNMV}).  The following corollary of Theorem \ref{SNMV}
follows from the spectral theorem.

\smallskip

\begin{cor}Let $(X,T,\mu)$ be a compact metric, topological dynamical system. Then for any
\linebreak $F \in L^2(X,\mu)$, for any $f \in C(X_{\A_2})$, we have
$$
\frac{1}{N}\sum_{n=1}^{N}\bmu(n)f(\bmu^2(n))F(T^nx) \tend{N}{+\infty}{L^2}0\,.
$$
\end{cor}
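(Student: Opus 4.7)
The plan is to reduce the $L^2(\mu)$ estimate to a family of scalar estimates indexed by the spectral parameter $\theta \in \T$, one for each frequency, and then to apply Theorem~\ref{SNMV} together with bounded convergence. Write $a_n \setdef \bmu(n)\,f(\bmu^2(n))$; note that $|a_n| \leq \|f\|_\infty$ uniformly in $n$.

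First I would fix $F \in L^2(X,\mu)$ and let $\sigma_F$ denote its spectral measure on $\T$ for the Koopman operator $U_T$, so that $\widehat{\sigma_F}(k) = \langle U_T^k F, F\rangle$. By the spectral theorem, the unitary $U_T$ on the cyclic subspace generated by $F$ is isomorphic to multiplication by $e^{i\theta}$ on $L^2(\T,\sigma_F)$, and this isomorphism sends $F$ to the constant function $1$. Consequently, for every $N \geq 1$,
\begin{equation*}
\left\| \frac{1}{N}\sum_{n=1}^{N} a_n\, U_T^n F \right\|_{L^2(\mu)}^{2}
= \int_{\T} \left| \frac{1}{N}\sum_{n=1}^{N} a_n\, e^{in\theta} \right|^{2} d\sigma_F(\theta).
\end{equation*}

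Next I would invoke Theorem~\ref{SNMV}: for every $\theta \in \R$ (equivalently, every $\theta \in \T$),
\begin{equation*}
\frac{1}{N}\sum_{n=1}^{N} a_n\, e^{in\theta} = \frac{1}{N}\sum_{n=1}^{N} \bmu(n)\,f(\bmu^2(n))\,e^{in\theta} \xrightarrow[N\to+\infty]{} 0,
\end{equation*}
so the integrand converges to $0$ pointwise on $\T$. At the same time the integrand is uniformly bounded by $\|f\|_\infty^{2}$, and $\sigma_F$ is a finite positive measure on $\T$. Therefore the bounded (dominated) convergence theorem yields
\begin{equation*}
\int_{\T} \left| \frac{1}{N}\sum_{n=1}^{N} a_n\, e^{in\theta} \right|^{2} d\sigma_F(\theta) \xrightarrow[N\to+\infty]{} 0,
\end{equation*}
which is exactly the claimed $L^2(\mu)$-convergence applied to the function $x \mapsto F(T^n x)$.

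I do not expect a serious obstacle here: the only nontrivial input is the pointwise-in-$\theta$ statement of Theorem~\ref{SNMV}, and everything else is the spectral representation of a single vector plus dominated convergence against a finite spectral measure. One small point worth checking is that the reduction to the cyclic subspace generated by $F$ is legitimate: this is standard, since $C(F) \setdef \overline{\mathrm{span}}\{U_T^n F : n \in \Z\}$ is invariant under $U_T$ and contains all $U_T^n F$, so the $L^2(\mu)$-norm of $\frac{1}{N}\sum_{n=1}^{N} a_n U_T^n F$ coincides with the norm computed inside $C(F) \cong L^2(\T,\sigma_F)$.
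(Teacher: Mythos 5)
Your proposal is correct and follows essentially the same route as the paper: pass to the spectral measure $\sigma_F$ via the spectral theorem, apply Theorem~\ref{SNMV} pointwise in $\theta$, and conclude by dominated convergence against the finite measure $\sigma_F$. Your added remark justifying the reduction to the cyclic subspace $C(F)$ is a sensible detail the paper leaves implicit.
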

\begin{proof}By the spectral theorem, we can write
\begin{eqnarray*}
	\Big\|\frac{1}{N}\sum_{n=1}^{N}\bmu(n)f(\bmu^2(n))F(T^nx)\Big\|_{
		L^2(X,\mu)}=\Big\|\frac1{N}\sum_{n=1}^{N}\bmu(n)f(\bmu^2(n))e^{in \theta}\Big\|_{L^2(\sigma_F)},\\,
\end{eqnarray*}
where $\sigma_F$ is the spectral measure of $F$ for the Koopman operator $F \mapsto F \circ T$. Now,
by Theorem \ref{SNMV}, we have 
$$
\frac{1}{N}\sum_{n=1}^{N}\bmu(n)f(\bmu^2(n)) e^{in\theta} \tend{N}{+\infty}{}0\,.
$$
Moreover, the sequence $\Big(\frac{1}{N}\sum_{n=1}^{N}\bmu(n)f(\bmu^2(n)) e^{in\theta}\Big)_{N \in \N}$
is bounded. Hence, by the Lebesgue dominated  convergence  theorem, we obtain 
$$
\Big\|\frac{1}{N}\sum_{n=1}^{N}\bmu(n)f(\bmu^2(n))F(T^nx)\Big\|_{
L^2(X,\mu)}  \tend{N}{+\infty}{}0\,.
$$
The proof of the corollary is complete.
\end{proof}\\

\smallskip

At this point we ask whether in the above corollary the convergence can be almost sure convergence?
In the class of dynamical systems for which every invariant measure has discrete spectrum or the
Lebesgue spectrum, this can be done. 

\smallskip

\begin{prop}
Let $(X,T)$ be a dynamical system for which every invariant measure has discrete spectrum. Then
for any $f\in C(X_{\A_2})$ and $F\in C(X)$ and for any invariant measure $\mu$,
$$
\lim\limits_{N\to \infty}\frac{1}{N}\sum_{n=1}^{N}\bmu(n)f(\bmu^2(n))F(T^nx) = 0\,,\ \mu\ \textrm {a.e.}\,.
$$
In particular, for $k \geq 2$ and $a_1, \cdots,a_k$ be distinct non-negative integers, for any $F\in C(X)$,
on a set of points $x$ of full measure we have
$$
\lim\limits_{N\to \infty}\frac {1}{N}\sum_{n \leq N}\bmu(n)\bmu^2(n+a_1)\cdots\bmu^2(n+a_k)F(T^n x) = 0\,.
$$
\end{prop}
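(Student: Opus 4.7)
The plan is to upgrade the $L^2$-convergence established in the preceding corollary to $\mu$-a.e.\ convergence, by expanding $F$ in an orthonormal basis of eigenfunctions and reducing each term to the single-frequency estimate of Theorem \ref{SNMV}. First I would reduce to the ergodic case: each ergodic component of $\mu$ is itself an invariant measure and therefore has discrete spectrum by hypothesis, so an a.e.\ statement on every ergodic component yields the statement for $\mu$ via Fubini applied to the ergodic decomposition. With $\mu$ ergodic in hand, the Halmos--von Neumann theorem supplies an orthonormal basis $\{\phi_j\}_{j\geq 0}$ of $L^2(X,\mu)$ consisting of eigenfunctions, $\phi_j\circ T = e^{i\theta_j}\phi_j$, and one may arrange $|\phi_j|\equiv 1$ $\mu$-a.e.

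For each fixed $j$, on the $\mu$-full-measure set where $\phi_j\circ T^n = e^{in\theta_j}\phi_j$ holds for every $n\geq 0$, one has
\begin{equation*}
\frac{1}{N}\sum_{n=1}^{N}\bmu(n)f(\bmu^2(n))\phi_j(T^n x) \;=\; \phi_j(x)\cdot\frac{1}{N}\sum_{n=1}^{N}\bmu(n)f(\bmu^2(n))e^{in\theta_j} \;\longrightarrow\; 0,
\end{equation*}
the limit being exactly Theorem \ref{SNMV}. Writing $F = \sum_j c_j\phi_j$ in $L^2(X,\mu)$ with $c_j = \langle F,\phi_j\rangle$ and $F_k := \sum_{j=0}^{k}c_j\phi_j$, linearity then gives, for every fixed $k$, $A_N^{(k)}(x) := \frac{1}{N}\sum_{n=1}^{N}\bmu(n)f(\bmu^2(n))F_k(T^n x) \longrightarrow 0$ for $\mu$-a.e.\ $x$. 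For the tail, the triangle inequality together with Birkhoff's pointwise ergodic theorem yields, for $\mu$-a.e.\ $x$,
\begin{equation*}
\limsup_{N\to\infty}\bigl|A_N(x) - A_N^{(k)}(x)\bigr| \;\leq\; \|f\|_\infty \cdot \lim_{N\to\infty}\frac{1}{N}\sum_{n=1}^{N}|F-F_k|(T^n x) \;=\; \|f\|_\infty\,\|F-F_k\|_{L^1(\mu)}.
\end{equation*}

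Intersecting the countably many $\mu$-full-measure sets (indexed by $j$ and by $k$) produces a single $\mu$-full-measure set $\Omega_0$ on which, simultaneously for every $k$, both $A_N^{(k)}(x)\to 0$ and the Birkhoff conclusion above hold. For $x\in\Omega_0$ this gives $\limsup_N|A_N(x)| \leq \|f\|_\infty\|F-F_k\|_{L^1(\mu)}$ for every $k$, and the right-hand side tends to $0$ as $k\to\infty$, proving the desired a.e.\ convergence. The main (and mild) obstacle is precisely this quantifier-juggling that yields a single common full-measure set; all the substantive analytic work is carried by Theorem \ref{SNMV} together with the Halmos--von Neumann eigenfunction decomposition. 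Finally, the ``in particular'' part follows by taking $f(y) = \textrm{pr}_{a_1}(y)\cdots\textrm{pr}_{a_k}(y)\in C(X_{\A_2})$, for which $f(\bmu^2(n)) = \bmu^2(n+a_1)\cdots\bmu^2(n+a_k)$.
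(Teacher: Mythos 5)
Your proof is correct, but it runs along a different track from the paper's. The paper's argument is a three-line appeal to machinery it has already set up: $\mu$-almost every point is generic for some ergodic component (Proposition 5.12 of Denker--Grillenberger--Sigmund), that component has discrete spectrum by hypothesis, hence $n\mapsto F(T^nx)$ is a Besicovitch almost periodic sequence by the paper's earlier lemma, and the conclusion then follows from the Besicovitch/Davenport orthogonality mechanism (Lemma \ref{BDav}) exactly as in the proof of Theorem \ref{SNMV}. You instead make the trigonometric approximation explicit: after the same reduction to ergodic components, you expand $F$ in a Halmos--von Neumann eigenbasis, dispatch each frequency by Theorem \ref{SNMV}, and control the tail $F-F_k$ with Birkhoff's theorem applied to $|F-F_k|\in L^1(\mu)$, intersecting the countably many full-measure sets at the end. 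The two routes are morally the same --- both rest on approximating the orbit sequence by trigonometric polynomials and invoking the single-frequency estimate --- but yours is self-contained where the paper's is a citation chain, and your Birkhoff tail bound $\limsup_N|A_N(x)-A_N^{(k)}(x)|\leq\|f\|_\infty\|F-F_k\|_{L^1(\mu)}$ is a clean substitute for the $B_1$-seminorm bookkeeping hidden in the Besicovitch framework. All the steps check out, including the quantifier management and the ``in particular'' specialization to $f=\textrm{pr}_{a_1}\cdots\textrm{pr}_{a_k}$.
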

\begin{proof}  Recall that the set of points $x$ which are generic for some ergodic invariant measure have
full measure, (see \cite[Proposition 5.12]{DGS}). By our hypothesis, such $x$ is generic for an ergodic measure
with discrete spectrum. Hence the map $n\mapsto F(T^n x)$ is  Besicovitch almost periodic.
\end{proof}

\smallskip

\begin{rem} We remark that if for some invariant measure $\mu$, the system $(X,T,\mu)$ has Lebesgue
spectrum, then the above convergence also hold for $ F\in L^1(X,\mu )$, $\mu$ a.e.$x$. This follows
from, \cite[Theorem 4.2]{BeL}. 
\end{rem}

\smallskip

For certain very special class of dynamical systems this almost sure convergence can be extended to
convergence everywhere.

\smallskip

\begin{prop} Let $(X,T,\mu )$ be mean equicontinuous compact metric dynamical system. Then the
above convergence holds for any $x\in X$.
\end{prop}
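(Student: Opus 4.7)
The strategy is to reduce the claim to Lemma \ref{BDav} by showing that for every $x\in X$ the sequence
\begin{equation*}
n\mapsto f(\bmu^2(n))\,F(T^nx)
\end{equation*}
is Besicovitch almost periodic. In the previous proposition this was obtained only for $\mu$-almost every $x$, because one invoked generic points of an ergodic component with discrete spectrum. The new input to be used here is that for a mean equicontinuous system the conclusion upgrades from $\mu$-almost every $x$ to every $x$.

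\emph{Step 1 (mean equicontinuity $\Rightarrow$ pointwise Besicovitch almost periodicity).} I would recall the characterization of mean equicontinuous systems (Li--Tu--Ye, Downarowicz--Glasner): $(X,T)$ is mean equicontinuous iff the factor map $\pi:X\to X_{\mathrm{eq}}$ onto the maximal equicontinuous factor is an isometric embedding in the Besicovitch pseudometric $\bar d(x,y)=\limsup_N \frac1N\sum_{n=0}^{N-1}d(T^nx,T^ny)$. Consequently, for any $F\in C(X)$ and any $x\in X$, the orbit $(F(T^nx))_n$ is $B^1$-close to the almost periodic sequence $(\tilde F(R^n\pi(x)))_n$ on the equicontinuous factor (where $R$ is the rotation and $\tilde F$ is a continuous extension via averaging along fibres), and the latter is Besicovitch almost periodic in the classical sense by Bohr's theory. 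Thus $(F(T^nx))_n$ is Besicovitch almost periodic for \emph{every} $x$, not only at $\mu$-generic points.

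\emph{Step 2 (the $\bmu^2$-factor).} By the earlier Proposition derived from the Sarnak--Cellarosi--Sinai theorem, the sequence $n\mapsto f(S^n\bmu^2)$ is Besicovitch almost periodic for every $f\in C(X_{\A_2})$.

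\emph{Step 3 (closure of Besicovitch a.p.\ under bounded products).} The pointwise product of two bounded Besicovitch almost periodic sequences is Besicovitch almost periodic: given $\eps>0$, choose trigonometric polynomials $P,Q$ with $\|F(T^\bullet x)-P\|_{B_1}<\eps$ and $\|f(S^\bullet\bmu^2)-Q\|_{B_1}<\eps$; then $PQ$ is again a trigonometric polynomial and
\begin{equation*}
\bigl\|F(T^\bullet x)\,f(S^\bullet\bmu^2) - PQ\bigr\|_{B_1}\leq \|F\|_\infty\,\eps + \|Q\|_\infty\,\eps .
\end{equation*}

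\emph{Step 4 (conclusion).} Applying Lemma \ref{BDav} to the Besicovitch almost periodic sequence $b_n=f(\bmu^2(n))F(T^nx)$ yields
\begin{equation*}
\frac{1}{N}\sum_{n=1}^{N}\bmu(n)\,f(\bmu^2(n))\,F(T^nx)\tend{N}{+\infty}{}0,
\end{equation*}
valid for every $x\in X$ as required.

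\emph{Main obstacle.} The only nontrivial ingredient is Step~1, i.e.\ promoting the almost-everywhere Besicovitch almost periodicity of observable orbits (which holds for any discrete spectrum ergodic measure) to an \emph{everywhere} statement. This uses in an essential way the uniform control supplied by mean equicontinuity, namely that the factor map onto the maximal equicontinuous factor is an isometry in $\bar d$; the other three steps are routine $B_1$-approximation arguments.
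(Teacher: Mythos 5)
Your proposal is correct and follows the same overall skeleton as the paper: reduce to showing that $n\mapsto F(T^nx)$ is Besicovitch almost periodic for \emph{every} $x$, multiply by the Besicovitch sequence $n\mapsto f(S^n\bmu^2)$ coming from the Mirsky/Cellarosi--Sinai discrete-spectrum factor, and finish with the Davenport-based lemma. The only genuine divergence is in how mean equicontinuity is cashed out. The paper does it in one line: mean equicontinuity forces the orbit closure of \emph{each} point to be uniquely ergodic with discrete spectrum, so every point is generic for a discrete-spectrum ergodic measure and the earlier lemma (generic point of a discrete-spectrum ergodic system is a Besicovitch point) applies verbatim. You instead invoke the Li--Tu--Ye/Downarowicz--Glasner characterization via the maximal equicontinuous factor and the Besicovitch pseudometric $\bar d$, approximating $(F(T^nx))_n$ in $B_1$ by an almost periodic sequence on the rotation factor. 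Both routes are valid; the paper's is shorter because it recycles a lemma already stated for the previous proposition, while yours is more self-contained on the structure-theory side but imports a heavier external theorem (and requires a little care in defining the fibrewise extension $\tilde F$, which you gloss over). Your Step 3 on products of bounded Besicovitch sequences is the same implicit step the paper uses throughout and is fine.
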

\begin{proof} Since $(X,T,\mu)$ is mean equicontinuous, the orbit closure of each $x\in X$ is
uniquely ergodic with discrete spectrum, Hence the map $n\to F(T^n x)$ is  Besicovitch almost periodic
for any \linebreak $F\in C(X)$.
\end{proof}\\

\smallskip

\noindent We thus ask the following question:

\smallskip

\begin{que}\label{question} Do we have for any dynamical system $(X,T,\mu )$ with topological
entropy zero, for any $f\in C(X_{\A_2})$ and $F\in C(X)$, for any $x \in X$,
$$
\lim\limits_{N\to \infty}\frac {1}{N}\sum_{n \leq N}\bmu(n)f(\bmu^2(n))F(T^n x) = 0\,?
$$  
\end{que} 

\medskip

We ask also on the convergence almost everywhere for any dynamical system. Let us further
notice that Question \ref{question} is a weak form of the strong Sarnak M\"{o}bius conjecture.




\medskip
\subsection{A M\"obius-weighted ergodic theorem for weakly rigid contractions}\label{Lin}
\medskip

\subsubsection{M\"obius-weighted ergodic theorems}
	


In \cite{AL} it is proved that any weakly almost periodic operator $T$ on a Banach space $E$
satisfies 
\begin{equation} \label{strong-mobius}
\Big\|\frac1N\sum_{n=1}^N \mu(n)T^n v\Big\| \to 0, \quad \forall v\in E.
\end{equation}
 \eqref{strong-mobius} was also proved for other classes of operators. In connection with 
Sarnak's conjecture, a topological entropy $h^*_{top}(T)$ for power-bounded operators was 
defined, and it was proved that {\it if Sarnak's conjecture is true}, and $T$ is power-bounded 
with $h^*_{top}(T)=0$, then 
\begin{equation} \label{mobius}
\frac1N\sum_{n=1}^N \mu(n)T^n v \to 0 \quad \text{weakly}, \quad \forall v\in E.
\end{equation}
\medskip

{\bf Definition.} An operator $T$ on a Banach space $E$ is called {\it (weakly) rigid} if 
for some increasing subsequence $(n_k)$ we have $T^{n_k}v \to v$ (weakly) for every $v \in E$.

It was shown in \cite[Proposition 7.2]{AL} that a power-bounded weakly rigid $T$ is invertible,
with $T^{-1}$ also power-bounded. In \cite[Theorem 3.5]{AL} it was proved that if $T$ is a
weakly rigid contraction on a separable Banach space $E$, then $h^*_{top}(T)=0$. 
As an application of our main result, we show in this section that every weakly rigid 
power-bounded $T$ satisfies \eqref{mobius}.
\medskip

\subsubsection{A M\"obius-weighted ergodic theorem for weakly rigid operators}

In \cite[Corollary 2.12]{AL} it is proved that a weakly rigid power-bounded $T$, on a Banach 
space $E$ which does not contain an isomorphic copy of $\ell_1$, satisfies \eqref{mobius}. 
Here we remove the restriction on $E$.

\begin{thm} \label{rigid-II}
 Let $T$ be a weakly rigid power-bounded operator on a Banach space $E$. Then
$$
\frac1N\sum_{n=1}^N \mu(n)T^n v \to 0 \quad \text{weakly}, \quad \forall v\in E.
$$
\end{thm}
\begin{proof}  The operator $T$ is a contraction in the equivalent norm 
$\||v\||:= \sup_{n\ge0} \|T^nv\|$, so we assume that $T$ is a contraction,
and by \cite[Proposition 7.2]{AL}  it is an invertible isometry.

Fix $v \in E$; by looking at the closed linear manifold generated by the orbit $(T^nv)_{n\ge 0}$,
we may and do assume that $E$ is separable. Hence $B$, the unit ball of $E^*$, 
is compact metric in the weak* topology, and the restriction $\tau:=T^*_{|B}$ is a 
homeomorphism of $B$. We put $Sf:=f\circ \tau$ for $f \in C(B)$.

Since $T$ is weakly rigid, $\tau^{n_k}\phi \to \phi$ weak* for every $\phi \in B$, 
so $S^{n_k}f(\phi) \to f(\phi)$ for every $\phi \in B$ and  $f \in C(B)$. Let $\nu$ be a
$\tau$-invariant probability on the Borel sets of $B$. Then $S^{n_k}f \to f$ in $L^2(B,\nu)$
when $f \in C(B)$, by Lebesgue's dominated convergence theorem. Since $C(B)$ is dense in 
$L^2(B,\nu)$, we have that $S$ is rigid on $L^2(B,\nu)$. By Baxter's \cite[Theorem 1, p. 2.20]{B},
see also \cite[Theorems 4 and 5]{A}, $S$ on $L^2(B,\nu)$ has singular maximal spectral type.
We thus obtain that for every $\tau$-invariant probability $\nu$, the maximal spectral type of 
$S$ in $L^2(B,\nu)$ is singular. By el Abdalaoui and Nerurkar (see [Theorem 3.1, in arxiv.org/abs/2006.07646, p.~8])
$$
\frac1N\sum_{n=1}^N f(\tau^n \phi) \to 0 \quad \forall f\in C(B),\  \forall \phi \in B.
$$
For $v \in E$, we take $f(\phi)=\phi(v)$ and obtain \eqref{mobius}.
\end{proof}

{\bf Remarks.} 1. The reduction at the beginning of the proof of Theorem \ref{rigid-II}
shows that we may assume that for each $v \in E$ there is a sequence $(n_k)$,
which may depend on $v$, such that $T^{n_k}v \to v$ weakly.

2. Theorem \ref{rigid-II} improves \cite[Proposition 3.6]{AL}, in which there is a constraint
on $(n_k)$, due to Kanigowski et al., and \cite[Proposition 3.7]{AL}, in which weak
convergence of {\it logarithmic} averages is proved for rigid operators.

3. As shown in \cite[Propositions 7.1 and 7.3]{AL}, weak rigidity does not imply rigidity, 
and rigidity does not imply mean ergodicity.

\medskip

\section{Veech's Conjecture and Proof of Veech's theorem (Theorem \ref{Veech}).}\label{Proof-of-Veech}

\medskip

At this point, we state a conjecture of W. Veech, (which he announced in his unpublished notes
\cite{VeechNotes2}). As we shall see, the proof of this theorem actually allows us to view our main
theorem differently. We shall first give a proof of his theorem and then, (using the notation developed
in the proof), discuss our main result in the light of this. 

\smallskip

\begin{conj} \label{Veech-conj} [W. Veech] For any $\eta \in \mathcal{I}_S(\bmu)$, we have
$$
\textrm{pr}_1 \in L^2(X_{\A_3},\Pi_{\eta}(S),\eta)^{\perp}\,,
$$
\end{conj}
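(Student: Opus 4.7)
The plan is to reduce the conjecture to the identity $\Pi_\eta(S) = s^{-1}(\B(\A_2))$ (modulo $\eta$-null sets) for every $\eta \in \mathcal{I}_S(\bmu)$, from which the claim then follows directly from Proposition \ref{VMV}. One inclusion is essentially free: since $\bmu^2$ is generic for the Mirsky measure $\nu_M$, we have $s_*\eta = \nu_M$ for every $\eta \in \mathcal{I}_S(\bmu)$; and because $(X_{\A_2}, S, \nu_M)$ has discrete spectrum by Sarnak--Cellarosi--Sinai, it has zero entropy, so $s^{-1}(\B(\A_2)) \subseteq \Pi_\eta(S)$. The substantive content of the conjecture is thus the reverse containment $\Pi_\eta(S) \subseteq s^{-1}(\B(\A_2))$.

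First I would reduce to ergodic $\eta$ via the ergodic decomposition, and then consider the orthogonal decomposition
\begin{equation*}
L^2(X_{\A_3}, \eta) \;=\; L^2(s^{-1}(\B(\A_2)), \eta) \,\oplus\, H_\eta \,\oplus\, L^2(X_{\A_3},\Pi_\eta(S), \eta)^\perp,
\end{equation*}
where $H_\eta := L^2(X_{\A_3},\Pi_\eta(S), \eta) \ominus L^2(s^{-1}(\B(\A_2)), \eta)$ is the ``extra'' Pinsker part. Proposition \ref{VMV} shows that $\textrm{pr}_1$ is orthogonal to the first summand, and by Lemma \ref{P} the Koopman operator acts with countable Lebesgue spectrum on the third. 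Hence the whole task is to show that the projection of $\textrm{pr}_1$ onto $H_\eta$ vanishes. My preferred approach here is to exploit the base case $\eta = \eta_M$ of Theorem \ref{SV}, where $H_{\eta_M} = \{0\}$ and the spectral measure of $\textrm{pr}_1$ is Lebesgue, together with a Hellinger--affinity argument (Corollary \ref{BL}) to pin down the potential spectral measures of $\textrm{pr}_1$ under an arbitrary $\eta$ by comparing them against the Lebesgue spectral measure arising at $\eta_M$.

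The main obstacle I anticipate is ruling out the ``extra'' Pinsker part $H_\eta$ when $\eta \ne \eta_M$. The uniqueness assertion in Theorem \ref{SV} relies essentially on admissibility of $\eta_M$, a property that a generic $\eta \in \mathcal{I}_S(\bmu)$ need not enjoy; so the spectral comparison argument above will need a substitute for admissibility. An alternative route would be to strengthen Theorem \ref{SNMV} by proving that every $\Pi_\eta(S)$-measurable $L^2$ function, evaluated along the orbit of $\bmu$, yields a Besicovitch almost periodic sequence via the Rokhlin--Sinai generating partition theorem, so that Davenport's estimate (Lemma \ref{BDav}) forces its correlation with $\bmu$ to vanish. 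Making this rigorous, however, appears to require multi-point Chowla-type information on the correlations of $\bmu$ beyond the order-two level treated in Theorem \ref{SNMV}, which is the deep reason this statement remains a conjecture.
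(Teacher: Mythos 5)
This statement is labeled as a \emph{conjecture} in the paper (attributed to W.~Veech), and the paper offers no proof of it: what the paper actually proves is Theorem \ref{Veech}, namely that \emph{if} the conjecture holds then Sarnak's M\"{o}bius disjointness follows, together with the strictly weaker Proposition \ref{VMV}, which gives orthogonality of $\textrm{pr}_1$ only to $L^2$ of the fixed sub-$\sigma$-algebra $s^{-1}(\B(\A_2))$ rather than to $L^2$ of the full Pinsker algebra $\Pi_\eta(S)$. So there is no proof in the paper against which your argument can be matched, and your proposal does not supply one either. Your reductions are essentially sound: the inclusion $s^{-1}(\B(\A_2)) \subseteq \Pi_\eta(S)$ does follow from the zero entropy of the Mirsky factor, Proposition \ref{VMV} does kill the component of $\textrm{pr}_1$ in $L^2(s^{-1}(\B(\A_2)),\eta)$, and you correctly isolate the remaining task as showing that $\textrm{pr}_1$ has no component in the ``extra'' Pinsker part $H_\eta$. (One small caution: the identity $\Pi_\eta(S)=s^{-1}(\B(\A_2))$ is sufficient for the conjecture but not equivalent to it, as you implicitly acknowledge later; and applying Lemma \ref{P} to the third summand requires verifying the infinite-dimensionality hypothesis via Lemma \ref{RH}, which the paper does only for $L^2(\Pi(S))^\perp$, not for $L^2(\Pi_\eta(S))^\perp$.)

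The genuine gap is exactly where you locate it: neither the Hellinger--affinity comparison against $\eta_M$ nor the Besicovitch route gives any control over $H_\eta$. The affinity argument (Corollary \ref{BL}) only bounds correlations by the affinity of spectral measures, and since you do not know the spectral type of $U_S$ restricted to $H_\eta$ --- it sits inside the Pinsker algebra, so Lemma \ref{P} says nothing about it, and it could a priori carry singular spectrum correlated with $\textrm{pr}_1$ --- the comparison with the Lebesgue spectral measure at $\eta_M$ does not close. The uniqueness in Theorem \ref{SV} is conditioned on admissibility, which an arbitrary $\eta\in\mathcal{I}_S(\bmu)$ obtained as a weak-star limit need not satisfy, so it cannot be transported. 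Your final paragraph is therefore the correct conclusion: the missing step requires higher-order Chowla-type information (equivalently, genericity of $\bmu$ for $\eta_M$), and the statement remains open. A proof attempt that ends by explaining why the key step cannot currently be carried out is not a proof; the honest verdict is that the proposal correctly frames the problem but establishes nothing beyond what Proposition \ref{VMV} already gives.
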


\smallskip

In \cite{Veech-preprint}, W. Veech proved that his conjecture implies Sarnak's M\"{o}bius disjointness. In the following, we state this precisely.

\smallskip

\begin{thm}[Veech's Theorem \cite{VeechNotes2}]\label{Veech} Suppose that for any
$\eta \in \mathcal{I}_S(\bmu)$, we have
$$
\textrm{pr}_1 \in L^2(X_{\A_3},\Pi_{\eta}(S),\eta)^{\perp}\,,
$$
then Sarnak M\"{o}bius disjointness holds. Here $\Pi_{\eta}(S)$ denotes the Pinsker sigma
algebra of the dynamical system $(X_{\A_3},S,\eta)$.
\end{thm}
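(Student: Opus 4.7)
My plan is to combine a joining construction with a spectral orthogonality argument built on Lemmas \ref{RH} and \ref{P}. Fix a zero topological entropy system $(X,T)$, a point $x\in X$, and $f\in C(X)$; after subtracting the means of $f$ against all relevant invariant measures (the constant piece contributes $0$ to the Ces\`aro average by PNT, itself a consequence of Davenport), it suffices to show that for every weak-$\ast$ limit $\rho$ of $\frac{1}{N}\sum_{n=1}^{N}\delta_{(T^n x,\,S^n\bmu)}$ on $X\times X_{\A_3}$, the cross-integral $\int (f\circ\pi_X)(\textrm{pr}_1\circ\pi_Y)\,d\rho$ vanishes, where $\pi_X,\pi_Y$ are the coordinate projections. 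Such $\rho$ is a $(T\times S)$-invariant joining of a $T$-invariant $\mu$ on $X$ (with $h_\mu(T)=0$ by the variational principle) and some $\eta\in\mathcal{I}_S(\bmu)$ on $X_{\A_3}$; the Cesaro sum in question agrees along the corresponding subsequence with such a cross-integral, and the original sequence is bounded.

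The heart of the argument is to show that $f\circ\pi_X$ and $\textrm{pr}_1\circ\pi_Y$ in $L^2(\rho)$ have mutually singular spectral measures with respect to $U_{T\times S}$. On the M\"obius side, the hypothesis gives $\textrm{pr}_1\in L^2(X_{\A_3},\Pi_\eta(S),\eta)^{\perp}$; discarding the degenerate case $\eta=\delta_{\mathbf 0}$ (where the conclusion is immediate), Lemma \ref{RH} forces the orthocomplement to be infinite-dimensional, and Lemma \ref{P} then supplies countable Lebesgue spectrum of $U_S$ on it. Hence the spectral measure of $\textrm{pr}_1$ in $L^2(X_{\A_3},\eta)$ is absolutely continuous with respect to Lebesgue on $\T$; the canonical isometric, Koopman-intertwining embedding $g\mapsto g\circ\pi_Y$ transports this absolute continuity to $\textrm{pr}_1\circ\pi_Y$ in $L^2(\rho)$. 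On the $(X,T)$ side, I would invoke the classical Kolmogorov–Sinai–Rokhlin fact that zero measure-theoretic entropy forces purely singular maximal spectral type for $U_T$ on $L_0^2(X,\mu)$, extending this to non-ergodic $\mu$ by ergodic decomposition together with the compatibility of the Lebesgue decomposition of spectral measures with integration over the ergodic components. Thus $f$ (with its mean already subtracted) has singular spectral measure in $L^2(X,\mu)$, and so does $f\circ\pi_X$ in $L^2(\rho)$ through the intertwiner $F\mapsto F\circ\pi_X$.

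Mutual singularity of $\sigma_{f\circ\pi_X}$ and $\sigma_{\textrm{pr}_1\circ\pi_Y}$ then forces the cross-spectral measure $\sigma_{f\circ\pi_X,\,\textrm{pr}_1\circ\pi_Y}$ to vanish: the standard bound $|\sigma_{u,v}|\le\sqrt{\sigma_u\,\sigma_v}$ in the sense of measures shows that on any Borel set either $\sigma_u$ or $\sigma_v$ already vanishes. Its zeroth Fourier coefficient is exactly the inner product in question, so $\int (f\circ\pi_X)(\textrm{pr}_1\circ\pi_Y)\,d\rho=0$, which completes the proof. The principal obstacle is the spectral-theoretic input on the $(X,T)$ side—cleanly invoking, in the non-ergodic case, that zero measure-theoretic entropy is equivalent to purely singular Koopman spectrum; this is the only spectral ingredient beyond the two Rokhlin–Sinai-type lemmas already stated in the paper, and once it is in place the joining framework plus the mutual-singularity argument do the rest.
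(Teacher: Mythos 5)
There is a fatal gap on the $(X,T)$ side of your argument: the ``classical fact'' you invoke --- that zero measure-theoretic entropy forces purely singular maximal spectral type for $U_T$ on $L^2_0(X,\mu)$ --- is false. The Rokhlin--Sinai theorem gives only one direction (positive entropy implies a countable Lebesgue component); the converse fails, the standard counterexamples being the time-one maps of horocycle flows, which have zero entropy and countable Lebesgue spectrum. This is exactly why the paper's Theorem \ref{main} must \emph{assume} that every invariant measure has singular spectrum, and why Veech's Theorem \ref{Veech} is a genuinely different statement: it applies to arbitrary zero-entropy systems, including those with absolutely continuous spectral components, so no argument based on mutual singularity of spectral measures can prove it. Your proposal, where it is correct (the joining $\rho$, the use of Lemmas \ref{RH} and \ref{P} to get absolute continuity of $\sigma_{\textrm{pr}_1}$ under $\eta$, and the cross-spectral-measure bound $|\sigma_{u,v}|\le\sqrt{\sigma_u\sigma_v}$, which is the Hellinger/affinity argument in disguise), is essentially a reproof of Theorem \ref{main}, not of Theorem \ref{Veech}.

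The paper's proof of Theorem \ref{Veech} is entropy-theoretic rather than spectral, and the hypothesis is used in a different way: one forms the joining $\xi$ of $\lambda\in\mathcal{I}_{\psi_1}(\bmu^2,\omega_1)$ with $\kappa\in\mathcal{I}_T(y)$, disintegrates $\xi$ over $\lambda$ to get a $\lambda$-a.e.\ defined equivariant map $(x,\omega)\mapsto\kappa_{(x,\omega)}\in\mathcal{P}(Y)$, and observes that the pushforward $\Theta=\sigma_*\lambda$ is a quasi-factor of the zero-entropy system $(Y,T,\kappa)$. The Glasner--Weiss theorem (Lemma \ref{GW}) then forces this quasi-factor to have zero entropy, so the function $F(x,\omega)=\int_Y f\,d\kappa_{(x,\omega)}$ is measurable with respect to the Pinsker algebra $\Pi_\lambda(\psi_1)$; the hypothesis $\textrm{pr}_1\in L^2(X_{\A_3},\Pi_\eta(S),\eta)^\perp$ is then applied \emph{directly} as $\eE(\textrm{Pr}_1\circ\alpha|_{\Pi_\lambda(\psi_1)})=0$ to conclude $\int(\textrm{Pr}_1\circ\alpha)\,F\,d\lambda=0$. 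If you want to salvage a spectral route, you would need to replace your false claim by the correct statement that $F$ lies in $L^2$ of the Pinsker algebra and pair it against a function orthogonal to that algebra --- which is precisely the paper's argument, and no longer spectral.
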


\smallskip
\noindent As a consequence of the proof of Theorem \ref{Veech}, we establish that Liouville flow is a factor of M\"{o}bius flow.

\smallskip

\begin{rem} The Chowla conjecture is equivalent to saying that $\bmu$ is quasi-generic,
hence, generic for the Chowla-Sarnak-Veech measure $\eta_M$. Therefore, if Chowla conjecture
holds then our assumption in Theorem \ref{Veech} is satisfied and hence Chowla conjecture
implies Sarnak's M\"{o}bius conjecture.
\end{rem}

\smallskip

\noindent We begin by denoting  $X_{\A^{*}_i},$ $i=2,3$
the subset of sequence $x$ for which the support- $\text{supp}(x)$ is infinite and let
$\Omega_j = \{\pm 1\}^{\Z_j}$, with $\Z_0 = \N \cup \{0\}$ and $\Z_1 = \Z$. We
introduce also the following skew product.
\begin{eqnarray}
& \psi_0 : & X_{\A_2}\times \Omega_0\rightarrow X_{\A_2}\times \Omega_0\\
\nonumber & & (x,\omega)\mapsto (Sx,S^{\textrm{pr}_1(x)}(\omega)).
\end{eqnarray}
The natural extension of $\psi_0$ to $X_{\A_2}\times \Omega_0$ is denoted by $\psi_1$.
Let $\alpha : \Z_1\to \Z_0$ be defined by putting $\alpha(\omega)=(\omega_k)_{k=0}^{+\infty}$.
Obviously, $\alpha$ is onto and  $\alpha \circ \psi_1=\psi_0 \circ \alpha.$ We further define
`co-ordinate wise' a map 
\begin{eqnarray}
& \Phi : & X_{\A_2}\times \Omega_0\rightarrow X_{\A_3}\\
\nonumber & & (x,\omega)\mapsto \Phi(x,\omega) : \textrm{pr}_n(\Phi(x,\omega))=\begin{cases}
0 &\textrm{if~~} n \not \in  \text{supp}(x)=\{n_1<n_2<\cdots<n_k<\cdots\}\\
\textrm{pr}_k(\omega) & \textrm{if~~} n=n_k \in \text{supp}(x)
\end{cases}
\end{eqnarray}
Consequently, we have
\begin{align}
\Phi(X_{\A_2}\times \Omega_0)&=X_{\A_3}, \\
&\textrm{and} \nonumber\\
\Phi \circ \psi_0&=S\circ \Phi   \label{phiformula}
\end{align}
$\Phi$ is also onto but not one to one. However, its restriction $\Phi : X_{\A^{*}_2} \times \Omega_0
\rightarrow X_{\A^{*}_3}$ is an onto homeomorphism. With a slight abuse of notation, denoting by
$\Phi^{-1}_*\eta$-the `push forward' of measure $\eta$ under the map $\Phi^{-1}$, we see that
the measure theoretic dynamical systems $(X_{\A^{*}_2} \times \Omega_0,\psi_0,\Phi^{-1}_*\eta)$
and $(X_{\A^{*}_3},S,\eta)$ are measure theoretically isomorphic by the map $\Phi$, where the
underlying sigma algebras are respective Borel sigma algebras and this holds for any
$\eta \in \mathcal{I}_S(\bmu)$. We also note that, since $\bmu^2$ is a generic point for the
Mirsky measure, it follows that (i) $s_*\eta = \nu_M$ for any $\eta \in \mathcal{I}_S(\bmu)$ and
(ii) $\nu_M(X_{\A_2}\backslash X_{\A^{*}_2}) = 0$. Next, notice that the Pinsker sigma algebra
 of the first of the above isomorphic dynamical systems can be written down by the following two
equal expressions mod null sets,
$$
\bigcap_{n=0}^{+\infty}\psi_0^{-n}\Big(\B(\A_2^* \times \Omega_0)\Big) = \Phi^{-1}\Big(\bigcap_{n=0}^{+\infty}S^{-n}\B(\A_3^*)\Big)\,.
$$
At this point, let us observe that the projection $\textrm{pr}_1$ on $X_{\A_3}$ may be represented in
$X_{\mathcal{A}_2}\setminus\{0\} \times \Omega_0$ by a function $\textrm{Pr}_1$ given by
$$
\textrm{Pr}_1(x,\omega)=\textrm{pr}_1(x)\textrm{pr}_1(\omega)\,.
$$
Moreover, we have
$$
\textrm{Pr}_1(\Phi(x,\omega))=\textrm{pr}_1(x.\omega)\,.
$$
Indeed, by definition, we have 
\[\textrm{Pr}_1(\Phi(x,\omega))=\begin{cases}
	0 &\textrm{if~~}  \textrm{pr}_1(x)=0\\
\textrm{pr}_1(\omega)	 & \textrm{if~~} \textrm{pr}_1(x)=1
\end{cases}\]
and 
\[\textrm{Pr}_1(x.\omega)=\begin{cases}
0 &\textrm{if~~} \textrm{pr}_1(x)=0\\
\textrm{pr}_1(\omega) & \textrm{if~~} \textrm{pr}_1(x)=1.
\end{cases}\]
We thus get 
\begin{eqnarray}\label{lespr}
\forall (x,\omega) \in X_{\A_2} \times \Omega_0,~~~~~ \textrm{Pr}_1 \circ \Phi(x,\omega)= \textrm{pr}_1(x,\omega).
\end{eqnarray}
Let $(\bmu^2,\omega_0)$ be the unique point such that 
\begin{eqnarray}\label{invmu}
\Phi((\bmu^2,\omega_0))=\bmu.
\end{eqnarray}
Therefore
$$
\mathcal{I}_{\psi_{0}}(\bmu^2,\omega_0)=\Phi^{-1}(I_{S}(\bmu))\,.
$$

At this point, notice that for the natural extension $\psi_1$ we have, by Cellarosi-Sinai Theorem \cite{CS}, the system $(X_{\mathcal{A}},S,\nu_M)$ is metrically isomorphic  to the rotation on the compact group $\ds \prod_{p \in \pr}\Z/p^2\Z$. Whence, the entropy with respect to the Mirsky measure $\nu_M$ of $S$ is $0$ and hence  the map $S$ is $\nu_M $.a.e  invertible. Let $\A^*_{2,0}\subset \A^*_2$ be a Borel set such that $\nu_M(\A^*_{2,0})=1$. Then,  $\psi_1$ is an homeomorphism  of $\A^*_{2,0} \times \Omega_1.$ We choose also $\omega_1 \in \Omega_1$ such that $\textrm{pr}_n(\omega_1)=\textrm{pr}_n(\omega_0),$  for all $n \in \N\cup \{0\}$. In the dynamical system $(X_{\A^*_2} \times \Omega_0,\psi_1)$, we keep the notation $\mathcal{I}_{\psi_1}(\bmu,\omega_1)$ for the set of invariant measure that arise from the forward orbit under $\psi_1$ of $(\bmu^2,\omega_1)$. For each $\lambda \in \mathcal{I}_{\psi_1}(\bmu,\omega_1)$, again, by Mirsky-Sarnak theorem, $\textrm{pr}_1\lambda=\nu_M$.  We further have
\begin{claim} 	
The map $\alpha : \lambda \in \mathcal{I}_{\psi_1}(\bmu^2,\omega_1) \longrightarrow \alpha(\lambda) \in \mathcal{I}_{\psi_0}(\bmu^2,\omega_0)$ is an isomorphism onto.
\end{claim}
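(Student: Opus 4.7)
The map is well-defined and affine: since $\alpha\circ\psi_1=\psi_0\circ\alpha$ and $\alpha(\bmu^2,\omega_1)=(\bmu^2,\omega_0)$, pushing forward the empirical measure $\frac{1}{N}\sum_{n=0}^{N-1}\delta_{\psi_1^n(\bmu^2,\omega_1)}$ under $\alpha$ yields exactly $\frac{1}{N}\sum_{n=0}^{N-1}\delta_{\psi_0^n(\bmu^2,\omega_0)}$. Passing to weak-star limits and using continuity of $\alpha$, the induced pushforward $\alpha_{*}$ maps $\mathcal{I}_{\psi_1}(\bmu^2,\omega_1)$ into $\mathcal{I}_{\psi_0}(\bmu^2,\omega_0)$ continuously.

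For surjectivity, given any $\nu\in\mathcal{I}_{\psi_0}(\bmu^2,\omega_0)$, I would pick $(N_k)$ with $\frac{1}{N_k}\sum_{n=0}^{N_k-1}\delta_{\psi_0^n(\bmu^2,\omega_0)}\to\nu$. By compactness of the space of probability measures on $X_{\A_2}\times\Omega_1$, I can extract a further subsequence along which $\frac{1}{N_k}\sum_{n=0}^{N_k-1}\delta_{\psi_1^n(\bmu^2,\omega_1)}$ converges weakly to some $\lambda$; then $\lambda\in\mathcal{I}_{\psi_1}(\bmu^2,\omega_1)$ by construction, and $\alpha_{*}\lambda=\nu$ by continuity of $\alpha$.

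For injectivity, the key point is that $\psi_1$ concretely realizes the natural extension of $\psi_0$, so any $\psi_1$-invariant measure is uniquely determined by its $\alpha$-projection. Iterating the skew product gives $\psi_1^m(x,\omega)=(S^m x,\,S^{s_m(x)}\omega)$, where $s_m(x)=\sum_{j=0}^{m-1}\textrm{pr}_1(S^j x)$. For any $\lambda\in\mathcal{I}_{\psi_1}(\bmu^2,\omega_1)$ the projection onto the first factor equals $\nu_M$ (since $\bmu^2$ is $\nu_M$-generic), so $\lambda$ is concentrated on $X_{\A^*_2}\times\Omega_1$ and $s_m(x)\to+\infty$ for $\lambda$-a.e.\ $(x,\omega)$. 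Given a cylinder $A$ depending on $\omega$-coordinates indexed by $\{-k,\ldots,n\}$ (plus finitely many $x$-coordinates), one checks that $\psi_1^{-m}A$ depends only on the $\omega$-coordinates indexed by $\{-k+s_m(x),\ldots,n+s_m(x)\}$. On the $\lambda$-full-measure set where $s_m(x)>k$, this set lies in $\alpha^{-1}\bigl(\mathcal{B}(X_{\A_2}\times\Omega_0)\bigr)$. Since $\lambda(A)=\lambda(\psi_1^{-m}A)$ by invariance, $\lambda(A)$ is computable from $\alpha_{*}\lambda$, and since such cylinders generate the Borel $\sigma$-algebra on $X_{\A_2}\times\Omega_1$, $\lambda$ is entirely determined by $\alpha_{*}\lambda$.

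The main obstacle is the injectivity step: one has to carefully exploit the divergence $s_m(x)\to+\infty$ (which is only $\lambda$-a.e., not uniform) to convert $\psi_1$-invariance into measurability with respect to $\alpha^{-1}(\mathcal{B})$. Once injectivity is established, the continuous affine bijection $\alpha_{*}$ between the compact convex sets of invariant measures is automatically a homeomorphism, giving the asserted isomorphism onto.
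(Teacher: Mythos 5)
Your proof is correct. On well-definedness and surjectivity it coincides with the paper's route: both push the empirical measures forward through the intertwining $\alpha\circ\psi_1=\psi_0\circ\alpha$, and both obtain surjectivity by extracting a further convergent subsequence in the compact space of measures on $X_{\A_2}\times\Omega_1$ and applying continuity of $\alpha$. The real difference is injectivity. The paper dispatches it in one sentence --- ``because only the forward orbit is involved, the limit does not depend upon the choice of extension of $\omega_0$ to a bisequence $\omega_1$, therefore $\alpha$ is both onto and invertible'' --- which is a hint rather than a proof: independence from the choice of $\omega_1$ does not by itself show that $\lambda$ is determined by $\alpha_*\lambda$ when $\lambda$ and $\lambda'$ arise along \emph{different} subsequences with the same projection. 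You prove exactly the statement that is needed: any $\lambda\in\mathcal{I}_{\psi_1}(\bmu^2,\omega_1)$ is determined by its restriction to $\alpha^{-1}\bigl(\B(X_{\A_2}\times\Omega_0)\bigr)$, via $\psi_1$-invariance of two-sided cylinders and the $\lambda$-a.e.\ divergence of the cocycle $s_m(x)=\sum_{j=0}^{m-1}\textrm{pr}_1(S^jx)$ (which you correctly ground in the fact that the first marginal is $\nu_M$ and $\nu_M$-a.e.\ point has infinite support). Both arguments rest on the same mechanism --- the forward dynamics eventually pushes every negative $\omega$-coordinate into the range governed by $\omega_0$ --- but yours is the standard ``a natural extension is determined by its base'' argument carried out at the level of invariant measures, and it correctly handles the non-uniformity of $s_m(x)\to+\infty$ by intersecting with the full-measure sets $\{s_m>k\}$. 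In short, your version supplies the detail that makes the paper's one-line invertibility claim rigorous, at the cost of a longer argument.
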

\begin{proof} Let $\lambda \in \mathcal{I}_{\psi_1}(\bmu,\omega_1)$. Then, there exists a sequence $(N_k)$ such that, for any continuous function $f$ on $\A^*_{2,0} \times \Omega_1$, we have \
$$
\frac{1}{N_k}\sum_{n=0}^{N_k}F(\psi^n(\bmu^2,\omega_1)) \tend{k}{+\infty}{}\int_{\A^*_{2,0} \times \Omega_0}f(x,\omega) d\lambda(x,\omega)\,.
$$
But, because only the forward orbit is involved, the corresponding limit for $(\bmu^2,\omega_1)$ not only exists but does not depend upon the choice of extension of the sequence $\omega_0$ to be a bisequence $\omega_1$. Therefore, $\alpha$ is both onto and invertible.
\end{proof}\\

Now, we start the proof of Theorem \ref{Veech}. Let $T$ be a homeomorphism of a compact metric space $Y$ and assume that $y \in Y$ is completely deterministic, that is, for any $\kappa \in \mathcal{I}_{T}(y)$, the measure-theoretic entropy $h_{\kappa}(T)=0$. Observe that $T$ induces an affine homeomorphism of $\mathcal{P}(Y)$ the space of probability measures on $Y$ equipped with the weak-star topology and $\B(\mathcal{P}(Y))$ the corresponding Borel field. Let us recall also the definition of quasi-factor needed in the proof.

\smallskip

\begin{defn}The dynamical system $(\mathcal{P}(Y)),\B(\mathcal{P}(Y)),\Theta ,T)$ is  said to be a quasi-factor of $(Y,\B(Y),\kappa,T)$ if $\Theta \in \mathcal{P}(\mathcal{P}(Y))$ is
\begin{enumerate}[(i)]
\item $T$-invariant, and
\item for any continuous function $f \in C(Y)$, we have
$$
\int_{\mathcal{P}(Y)}\int_{Y} f(z) d\nu(z) d\Theta(\nu)=\int_{Y}f(z) d\kappa(z)\,,
$$
that is, 
$$
\int_{\mathcal{P}(Y)}\nu d\Theta(\nu)=\kappa\,,
$$
we say that $\kappa$ is the barycenter of $\Theta$.
\end{enumerate}
\end{defn} 
We need also the following theorem due to Glasner and Weiss \cite[Theorem 18.17, p.326]{G},
\begin{lem}\label{GW}The quasi-factor of zero entropy system is a zero entropy system.
\end{lem}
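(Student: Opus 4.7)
My strategy is to realize $(\mathcal{P}(Y), \Theta, T_{*})$ as a measure-theoretic factor of a countable self-joining of the zero-entropy base system $(Y, \kappa, T)$, and then invoke two standard entropy inequalities: factor maps cannot increase entropy, and a countable joining of zero-entropy systems has zero entropy.

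Concretely, I would define the probability measure $\kappa^{*} \in \mathcal{P}(Y^{\mathbb{N}})$ by $\kappa^{*}(A) = \int_{\mathcal{P}(Y)} \nu^{\otimes \mathbb{N}}(A)\, d\Theta(\nu)$ and equip $Y^{\mathbb{N}}$ with the diagonal action $\tau = T \times T \times \cdots$. The identity $(\tau)_{*}\nu^{\otimes \mathbb{N}} = (T_{*}\nu)^{\otimes \mathbb{N}}$, combined with the change of variable $\mu = T_{*}\nu$ and the $T_{*}$-invariance of $\Theta$, yields $\tau$-invariance of $\kappa^{*}$, while the barycenter identity $\int \nu\, d\Theta(\nu) = \kappa$ forces every one-dimensional marginal of $\kappa^{*}$ to equal $\kappa$. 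Hence $\kappa^{*}$ is a self-joining of countably many copies of $(Y, \kappa, T)$. To exhibit $\Theta$ as a factor, I would introduce the empirical-distribution map $\Psi: Y^{\mathbb{N}} \to \mathcal{P}(Y)$, defined $\kappa^{*}$-almost everywhere by $\Psi(y_{0}, y_{1}, \ldots) = \lim_{n \to \infty} \tfrac{1}{n}\sum_{k=0}^{n-1}\delta_{y_{k}}$ in the weak-$*$ topology. Applying the strong law of large numbers under each $\nu^{\otimes \mathbb{N}}$ to a countable dense family in $C(Y)$ gives $\Psi = \nu$ almost surely under $\nu^{\otimes \mathbb{N}}$; integrating against $\Theta$ then yields $\Psi_{*}\kappa^{*} = \int \delta_{\nu}\, d\Theta(\nu) = \Theta$. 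The equivariance $\Psi \circ \tau = T_{*} \circ \Psi$ is immediate from the linearity of $T_{*}$ acting on empirical averages, so $\Psi$ is a factor map from $(Y^{\mathbb{N}}, \kappa^{*}, \tau)$ onto $(\mathcal{P}(Y), \Theta, T_{*})$.

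To conclude, I would show $h_{\kappa^{*}}(\tau) = 0$: any finite measurable partition of $Y^{\mathbb{N}}$ can be approximated in the entropy metric by partitions depending on only finitely many coordinates, and for such a cylinder partition the usual finite-joining subadditivity provides an entropy bound of $n \cdot h_{\kappa}(T) = 0$. Combined with the factor-map inequality, this yields $h_{\Theta}(T_{*}) \leq h_{\kappa^{*}}(\tau) = 0$, as desired. The main obstacle is precisely this countable passage in the entropy bound: finite subadditivity of joining entropy is standard, but one must set up the cylinder-partition approximation carefully so that arbitrary finite partitions of the infinite product are controlled by partitions on finitely many coordinates. A secondary technical point, resolved by Fubini together with the separability of $C(Y)$, is to ensure that $\Psi$ is well-defined and measurable on a set of full $\kappa^{*}$-measure.
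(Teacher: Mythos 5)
Your argument is correct, and it is essentially the proof of the result the paper itself does not prove but only cites (Glasner--Weiss, \cite[Theorem 18.17]{G}): one realizes the quasi-factor $(\mathcal{P}(Y),\Theta,T_*)$ as a factor of the infinite self-joining $\kappa^*=\int_{\mathcal{P}(Y)}\nu^{\otimes\mathbb{N}}\,d\Theta(\nu)$ via the empirical-distribution (de Finetti) map, and then uses finite subadditivity of joining entropy together with the Kolmogorov--Sinai approximation by cylinder partitions. All the steps you flag as technical points (a.s.\ existence and measurability of $\Psi$ via the law of large numbers on a countable dense subset of $C(Y)$, and the passage from finitely many coordinates to the full product $\sigma$-algebra) are handled exactly as you indicate, so there is no gap.
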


\smallskip

\paragraph{Proof of Theorem \ref{Veech}.} Let $f$ be a continuous function on $Y$ and write

\begin{eqnarray}\label{Jformula1}
\frac{1}{N}\sum_{n=1}^{N}\bmu(n)f(T^ny)= \frac{1}{N}\sum_{n=1}^{N}\textrm{pr}_1(S^n\bmu)f(T^ny).
\end{eqnarray}
  
\noindent{}Taking into account \eqref{lespr} combined with \eqref{invmu}, we can rewrite \eqref{Jformula1} as follows
\begin{eqnarray}\label{Jformula2}
\frac{1}{N}\sum_{n=1}^{N}\bmu(n)f(T^n y)= \frac{1}{N}\sum_{n=1}^{N}\textrm{pr}_1( \Phi \circ \alpha \circ \psi_1^n)(\bmu^2,\omega_1)f(T^n y),
\end{eqnarray}
since $\Phi \circ \alpha \circ \psi_1 = S \circ \Phi \circ \alpha,$ by the definition of $\alpha$ and \eqref{phiformula}. It follows, by \eqref{lespr}, that 

\begin{eqnarray}\label{Jformula3}
\frac{1}{N}\sum_{n=1}^{N}\bmu(n)f(T^n y) & = \frac{1}{N}\sum_{n=1}^{N}(\textrm{Pr}_1 \circ \alpha \circ \psi_1^n)(\bmu^2,\omega_1)f(T^ny),\\
& =\frac{1}{N}\sum_{n=1}^{N}\delta_{\psi_1 \times T((\bmu^2,\omega_1),y)}\big((\textrm{Pr}_1 \circ \alpha) \otimes f )\big)
\end{eqnarray}
Hence, by the compactness of $\mathcal{P}((X_{\A_2} \times \Omega_1) \times Y)$,  we can extract a subsequence $(N_k)$ such that
$$
 \frac{1}{N_k}\sum_{n=1}^{N_k}\delta_{\psi_1 \times T((\bmu^2,\omega_1),y)} \tend{k}{+\infty}{} \xi\,,
 $$
in the weak-star topology.  Denote by $\gamma_1$ and $\gamma_2$ the projections of $(X_{\A_2} \times \Omega_1) \times Y$ on $(X_{\A_2} \times \Omega_1)$ and $Y$, respectively. Then, 
$$
\gamma_1\xi \setdef \lambda \in \mathcal{I}_{\psi_1}(\bmu^2,\omega_1)\,,
$$
and
$$
\gamma_2\xi \setdef \kappa \in \mathcal{I}_{T} (y)\,.
$$
Disintegrating $\xi$ over $\lambda$, we get, for any $A \in \B(X_{\A_2} \times \Omega_1)$ and $B \in \B(Y)$, 
$$
\xi(A \times B)=\int_{A} \kappa_{(x,\omega)}(B) d\lambda(x,\omega)\,,
$$
with   $\kappa_{(x,\omega)} \in \mathcal{P}(Y)$. Moreover, $T\kappa_{(x,\omega)}=\kappa_{{\psi_1}(x,\omega)}$,
since $\psi_1$ is $\lambda$ a.e. invertible. We thus define $\lambda$ a.e. a mapping  
\begin{eqnarray}
& \sigma : & (X_{\A_2}\times \Omega_1)\rightarrow \mathcal{P}(Y)\\
\nonumber & & (x,\omega)\mapsto \sigma(x,\omega)=\kappa_{(x,\omega)}.
\end{eqnarray}
We further have $\sigma \circ \psi_1=T \circ \sigma.$ Put $\Theta=\sigma_*(\lambda)$, that is, $\Theta$ is the pushforward measure of $\lambda$ under $\sigma$. Whence,
$\Theta \in \mathcal{P}(\mathcal{P}(Y))$ is a $T$-invariant probability measure on $\mathcal{P}$. We further have
\begin{claim}The barycenter of $\Theta$ is $\kappa$.
\end{claim}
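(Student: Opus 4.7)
The plan is to verify the barycenter identity $\displaystyle\int_{\mathcal{P}(Y)} \nu\, d\Theta(\nu) = \kappa$ by testing against continuous functions on $Y$, chaining together three identifications: the definition of the pushforward $\Theta = \sigma_*\lambda$, the disintegration formula that defines the $\kappa_{(x,\omega)}$, and the fact that $\kappa = \gamma_2 \xi$.

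Concretely, fix $f \in C(Y)$. First I would rewrite the left hand side using the definition of $\Theta$ as a pushforward of $\lambda$ under $\sigma$:
\begin{equation*}
\int_{\mathcal{P}(Y)} \left(\int_Y f(z)\, d\nu(z)\right) d\Theta(\nu) \;=\; \int_{X_{\A_2}\times \Omega_1} \left(\int_Y f(z)\, d\sigma(x,\omega)(z)\right) d\lambda(x,\omega).
\end{equation*}
Since $\sigma(x,\omega) = \kappa_{(x,\omega)}$ by construction, the inner integral equals $\int_Y f(z)\, d\kappa_{(x,\omega)}(z)$. Next I would apply the disintegration formula
\begin{equation*}
\xi(A \times B) \;=\; \int_A \kappa_{(x,\omega)}(B)\, d\lambda(x,\omega),
\end{equation*}
extended from indicator functions to continuous (and then bounded measurable) functions on the product, in order to recognize
\begin{equation*}
\int_{X_{\A_2}\times \Omega_1} \left(\int_Y f(z)\, d\kappa_{(x,\omega)}(z)\right) d\lambda(x,\omega) \;=\; \int_{(X_{\A_2}\times \Omega_1)\times Y} f(z)\, d\xi\bigl((x,\omega),z\bigr).
\end{equation*}
Finally, this last integral depends only on the $Y$-marginal of $\xi$; since that marginal was identified earlier as $\gamma_2\xi = \kappa$, it equals $\int_Y f(z)\, d\kappa(z)$.

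Putting these three equalities together gives $\int_{\mathcal{P}(Y)} \int_Y f\, d\nu\, d\Theta(\nu) = \int_Y f\, d\kappa$ for every $f \in C(Y)$, which is exactly the statement that $\kappa$ is the barycenter of $\Theta$ as defined in the quasi-factor definition just above. There is no real obstacle here: the whole argument is a bookkeeping check that the chain $\Theta \xleftarrow{\sigma_*} \lambda$ of measures is compatible with the disintegration of $\xi$, and the only thing to be mildly careful about is that the disintegration identity is indeed valid for bounded continuous test functions on the product (which is standard on Polish spaces, hence on the compact metric spaces at hand). This immediately sets us up to invoke the Glasner–Weiss lemma (Lemma \ref{GW}) with $\kappa$ of zero entropy to conclude that $\Theta$ is a zero-entropy quasi-factor, which is the role this claim plays in the larger proof of Theorem \ref{Veech}.
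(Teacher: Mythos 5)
Your proposal is correct and follows essentially the same route as the paper: change of variables under the pushforward $\Theta=\sigma_*\lambda$, identification of $\sigma(x,\omega)$ with $\kappa_{(x,\omega)}$, the disintegration of $\xi$ over $\lambda$ applied to the test function $H((x,\omega),z)=f(z)$, and finally the marginal identity $\gamma_2\xi=\kappa$. No substantive difference from the paper's argument.
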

\begin{proof} We start by writing,
\begin{align}
\int_{\mathcal{P}(Y)} \rho d\Theta(\rho)&=\int_{X_{\A_2} \times \Omega_1} (\Id_{\mathcal{P}(Y)} \circ \sigma)(x,\omega) d\lambda(x,\omega)\\
&=\int_{X_{\A_2} \times \Omega_1} \sigma(x,\omega) d\lambda(x,\omega)\\
&=\int_{X_{\A_2} \times \Omega_1} \kappa_{(x,\omega)} d\lambda(x,\omega)
\end{align}	
Whence, for any $h \in C(Y)$, we have
\begin{align}
\int_{\mathcal{P}(Y)} \int_{Y} h(z)\rho(z) d\Theta(\rho)&=\int_{X_{\A_2} \times \Omega_1} \int_{Y} h(z) d(\kappa_{(x,\omega)})(z) d\lambda(x,\omega)
\end{align}
Put $$ H((x,\omega),z)=h(z), ~~~~~~\forall (x,\omega) \in X_{\A_2} \times \Omega_1, z \in Y.$$
Then 
\begin{align}
\int H((x,\omega),z) d\xi & = \int_{X_{\A_2} \times \Omega_1} \int_{Y} H((x,\omega),z) d\kappa_{(x,\omega)}(z) d\lambda(x,\omega)\\
& = \int_{X_{\A_2} \times \Omega_1} \int_{Y} h(z) d\kappa_{(x,\omega)}(z) d\lambda(x,\omega)\\
& = \int_{Y} h(z) d\kappa(z)
\end{align}
The last equality follows from the fact that $H$ depends only on the $y$ variable. Summarizing, we have proved
$$
\int_{\mathcal{P}(Y)} \int_{Y} h(z)\rho(z) d\Theta(\rho)=\int_{Y} h(z) d\kappa(z)\,,
$$
and the proof of the claim is complete. 
\end{proof}

\smallskip

Therefore $(\mathcal{P}(Y),T,\Theta)$ is a quasi-factor of $(Y,T,\kappa)$. But, since $y$ is
completely deterministic and $\kappa \in I_{T}(y)$, we get the entropy of  the system
$(\mathcal{P}(Y),T,\Theta)$  is zero, by Lemma \ref{GW}. We thus deduce that the
bounded function
$$
F(x,\omega)=\int_{Y} f(z) d\kappa_{(x,\omega)}(z),
$$
is measurable with respect to the Pinsker $\sigma$-algebra
$\Pi_{\lambda}(\psi_1)=\alpha^{-1}\Pi_{\alpha\lambda}(\psi_{0})$.
But, under our assumption, we have
\begin{claim}
$ \ds
\eE(\textrm{Pr}_1 \circ \alpha |_{\Pi_{\lambda}(\psi_1)})=0\,.
$
\end{claim}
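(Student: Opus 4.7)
\begin{proofofnobox}{Proof plan}
The plan is to reduce the claim to the hypothesis of Theorem \ref{Veech} by unwinding the two auxiliary constructions used to represent $(X_{\mathcal{A}_3},S,\eta)$, namely the passage from the two-sided system $(X_{\mathcal{A}_2}\times \Omega_1,\psi_1,\lambda)$ to its one-sided quotient via $\alpha$, and the identification of the one-sided skew product with $(X_{\mathcal{A}_3^*},S,\eta)$ via the homeomorphism $\Phi$. Along both steps the function $\textrm{Pr}_1\circ \alpha$ and the Pinsker algebra will travel together, so the hypothesis $\textrm{pr}_1\in L^2(X_{\mathcal{A}_3},\Pi_{\eta}(S),\eta)^{\perp}$ can be invoked at the final stage.

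The first step is to use the fact, recorded just before the claim, that $\Pi_{\lambda}(\psi_1)=\alpha^{-1}\Pi_{\alpha_{*}\lambda}(\psi_0)$, which holds because $\psi_1$ is the natural extension of $\psi_0$ via $\alpha$ (Pinsker algebras of natural extensions are pull-backs of the corresponding one-sided Pinsker algebras). Since $\textrm{Pr}_1\circ \alpha$ is itself $\alpha$-measurable, the abstract identity
\[
\mathbb{E}_{\lambda}\!\left(G\circ \alpha \,\big|\, \alpha^{-1}\mathcal{C}\right) \;=\; \mathbb{E}_{\alpha_{*}\lambda}\!\left(G\,\big|\, \mathcal{C}\right)\circ \alpha
\]
applied to $G=\textrm{Pr}_1$ and $\mathcal{C}=\Pi_{\alpha_{*}\lambda}(\psi_0)$ reduces the claim to showing
\[
\mathbb{E}_{\alpha_{*}\lambda}\!\left(\textrm{Pr}_1 \,\big|\, \Pi_{\alpha_{*}\lambda}(\psi_0)\right)=0.
\]

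Next I would transfer this identity to $X_{\mathcal{A}_3^*}$ via $\Phi$. By construction $\alpha_{*}\lambda\in \mathcal{I}_{\psi_0}(\bmu^2,\omega_0)$, and because $\Phi(\bmu^2,\omega_0)=\bmu$ and $\Phi\circ \psi_0=S\circ \Phi$, the measure $\eta\equidef \Phi_{*}(\alpha_{*}\lambda)$ lies in $\mathcal{I}_S(\bmu)$. The restriction $\Phi:X_{\mathcal{A}_2^*}\times \Omega_0\to X_{\mathcal{A}_3^*}$ is a homeomorphic measure isomorphism of $(X_{\mathcal{A}_2^*}\times \Omega_0,\psi_0,\alpha_{*}\lambda)$ onto $(X_{\mathcal{A}_3^*},S,\eta)$, and this isomorphism sends Pinsker algebras to Pinsker algebras. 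Combined with the identity $\textrm{Pr}_1=\textrm{pr}_1\circ \Phi$ established in \eqref{lespr}, this gives
\[
\mathbb{E}_{\alpha_{*}\lambda}\!\left(\textrm{Pr}_1\,\big|\, \Pi_{\alpha_{*}\lambda}(\psi_0)\right) \;=\; \mathbb{E}_{\eta}\!\left(\textrm{pr}_1\,\big|\, \Pi_{\eta}(S)\right)\circ \Phi,
\]
which vanishes by the standing hypothesis of Theorem \ref{Veech}. Combining the two reductions yields $\mathbb{E}_{\lambda}(\textrm{Pr}_1\circ\alpha \,|\,\Pi_{\lambda}(\psi_1))=0$, as required. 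The main technical point to verify carefully is that everything descends to the full-measure sets on which $\Phi$ is bijective; this is harmless because $\nu_M(X_{\mathcal{A}_2}\setminus X_{\mathcal{A}_2^*})=0$ and every $\lambda\in \mathcal{I}_{\psi_1}(\bmu^2,\omega_1)$ projects under the first coordinate to $\nu_M$ by the Mirsky--Sarnak theorem, so the discarded sets are $\lambda$-null.
\end{proofofnobox}
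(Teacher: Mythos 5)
Your proposal is correct and follows essentially the same route as the paper's own proof: first push the conditional expectation through $\alpha$ using $\Pi_{\lambda}(\psi_1)=\alpha^{-1}\Pi_{\alpha_{*}\lambda}(\psi_0)$, then through $\Phi$ using $\textrm{Pr}_1=\textrm{pr}_1\circ\Phi$ and the isomorphism of Pinsker algebras, and finally invoke the hypothesis of Theorem \ref{Veech} for $\eta=\Phi_{*}(\alpha_{*}\lambda)\in\mathcal{I}_S(\bmu)$. Your added remarks on why $\eta\in\mathcal{I}_S(\bmu)$ and on the $\lambda$-nullity of the sets where $\Phi$ fails to be bijective are points the paper leaves implicit.
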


\begin{proof} We have
\begin{align}
\eE(\textrm{Pr}_1 \circ \alpha |_{\Pi_{\lambda}(\psi_1)})&=\eE(\textrm{Pr}_1 \circ \alpha |_{\alpha^{-1}\Pi_{\alpha\lambda}(\psi_0)})\\
& = \eE(\textrm{Pr}_1|_{\Pi_{\alpha\lambda}(\psi_0)})\circ \alpha\\
& = \eE(\textrm{pr}_1\circ \Phi|_{\Phi^{-1}\Pi_{\Phi\alpha\lambda}(\psi_0)})\circ \alpha\\
& = \eE(\textrm{pr}_1|_{\Pi_{\Phi\alpha\lambda}(\psi_0)})\circ \Phi \circ \alpha\\
& = 0\,.
\end{align}	
The last equality follows by our assumption. 
\end{proof}

\noindent We thus conclude that 
$$
\int \textrm{Pr}_1 \circ \alpha(x,\omega) F(x,\omega) d\lambda(x,\omega)=\int \eE(\textrm{Pr}_1 \circ
\alpha |_{\Pi_{\lambda}(\psi_1)}) F(x,\omega) d\lambda(x,\omega) =0\,,
$$
that is, zero is the only accumulation point  of the sequence
$$
\frac{1}{N}\sum_{n=1}^{N}\bmu(n)f(T^ny)\,
$$
This completes the proof of the theorem. {\hfill{$\Box$}}

\smallskip

\begin{rem} (1) The famous Chowla conjecture is equivalent to saying that $\bmu$ is
quasi-generic, hence, generic for the Chowla-Sarnak-Veech measure $\eta_M$.
Therefore, if Chowla conjecture holds then our assumption in Theorem \ref{Veech}
is satisfied and hence Chowla conjecture implies Sarnak's M\"{o}bius conjecture.

\noindent (2) We notice that the point $\omega_0$ in the equation \eqref{invmu} is the
Liouville function $\bml$. Furthermore, $\bmu$ is generic for CSV measure is equivalent
to  $(\bmu^2,\bml)$ is generic for $\nu_M \times b(\frac12,\frac12)$. We thus
deduce that Chowla conjecture is equivalent to saying that all the systems
$(X_{\A_3}, \B(\A_3), \eta)$, $\eta \in \mathcal{I}_S(\bmu)$are reduced to just one
dynamical system.

\noindent (3) One might think of formulating a weaker form of Chowla conjecture by
demanding that for $\eta \in \mathcal{I}_S(\bmu)$, all of these dynamical systems
$(X_{\A_3}, \B(\A_3), \eta)$ are measure theoretically isomorphic. But this weak form,
by Sarnak-Veech's theorem (Theorem \ref{SV}), is actually equivalent to Chowla conjecture.
\end{rem}

\smallskip

\noindent The above proof of Veech's theorem actually allows us to view our main theorem
differently. In the following corollary we formulate Theorem \ref{main} in terms of
`spectral isomorphism' and this is where the notation and ideas developed in this section
come into play.

\smallskip

\begin{cor}\label{spect-iso} For each $\eta \in \mathcal{I}_S(\bmu)$, the dynamical system
$(X_{\A_3}, \B(\A_3),\eta ,S)$ is spectrally isomorphic to $(X_{\A_2}\times \Omega_0, \nu_M
\times b(\frac12,\frac12), \psi_0)$.
\end{cor}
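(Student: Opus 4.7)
The plan is to transport the system via the measure-theoretic isomorphism $\Phi$ constructed in Section~\ref{Proof-of-Veech} and then compare Koopman operators on the product $X_{\A_2}\times\Omega_0$ component by component. Since $\bmu^2$ is generic for the Mirsky measure, $s_*\eta=\nu_M$, and $\nu_M$ is non-atomic, so $\eta$ is concentrated on $X_{\A_3^*}$ and the restriction $\Phi:X_{\A_2^*}\times\Omega_0\to X_{\A_3^*}$ is a Borel isomorphism intertwining $\psi_0$ with $S$. Setting $\lambda:=\Phi^{-1}_*\eta$, this yields a measure-theoretic, and therefore unitary, isomorphism $(X_{\A_3},\B(\A_3),\eta,S)\cong(X_{\A_2}\times\Omega_0,\lambda,\psi_0)$; under this identification the base factor $s^{-1}(\B(\A_2))$ of $X_{\A_3}$ corresponds to the first-coordinate $\sigma$-algebra $\mathcal{C}:=\B(\A_2)\otimes\{\emptyset,\Omega_0\}$ on $X_{\A_2}\times\Omega_0$. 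The task thus reduces to establishing spectral isomorphism between $(X_{\A_2}\times\Omega_0,\lambda,\psi_0)$ and $(X_{\A_2}\times\Omega_0,\nu_M\times b(\tfrac12,\tfrac12),\psi_0)$.

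Next I would decompose both $L^2$ spaces along $\mathcal{C}$ and its orthocomplement. Both measures push forward to $\nu_M$ on the first coordinate, so the two Koopman operators coincide on $L^2(\mathcal{C})$ with that of the Mirsky system, which by Cellarosi--Sinai is isomorphic to a rotation on $\prod_{p\in\pr}\Z/p^2\Z$ and hence has simple discrete maximal spectral type $\sigma_M$. The target on $L^2(\mathcal{C})^{\perp}$ is countable Lebesgue spectrum for both systems. For $\nu_M\times b(\tfrac12,\tfrac12)$ this is classical: the Bernoulli fibres make $\psi_0$ a relatively $K$ extension of the zero-entropy Mirsky base, so $\Pi_{\nu_M\times b}(\psi_0)=\mathcal{C}$, and Lemma~\ref{P} yields countable Lebesgue spectrum on $L^2(\mathcal{C})^{\perp}$. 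For $\lambda$, Proposition~\ref{VMV} transported through $\Phi$ gives $\textrm{Pr}_1\in L^2(\mathcal{C},\lambda)^{\perp}$, and the Rokhlin--Sinai argument from the proof of Theorem~\ref{main} applied to $\Pi_\lambda(\psi_0)\supseteq\mathcal{C}$ delivers countable Lebesgue spectrum on $L^2(\Pi_\lambda(\psi_0),\lambda)^{\perp}$.

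The remaining step, and the main obstacle, is to exclude any intermediate zero-entropy factor and thus conclude $\Pi_\lambda(\psi_0)=\mathcal{C}$. My plan here is to mirror the quasi-factor argument used in the proof of Theorem~\ref{Veech}: a hypothetical factor $\mathcal{F}$ with $\mathcal{C}\subsetneq\mathcal{F}\subseteq\Pi_\lambda(\psi_0)$ would, via the disintegration of $\lambda$ over $\nu_M$ and the measurable map sending a base point to its fibre-conditional measure, be realised as a quasi-factor of the zero-entropy Mirsky system, and hence would itself have zero entropy by Lemma~\ref{GW}; Proposition~\ref{VMV} then forces $\eE_\lambda(\textrm{Pr}_1\mid\mathcal{F})=0$, and applying the same reasoning to every element of an $L^2$-dense family in $L^2(\mathcal{C},\lambda)^{\perp}$ forces $\mathcal{F}=\mathcal{C}$. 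Once this identification of the Pinsker $\sigma$-algebra with the base factor is in place, both systems decompose spectrally as simple discrete $\sigma_M$ on $L^2(\mathcal{C})$ plus countable Lebesgue on $L^2(\mathcal{C})^{\perp}$, yielding the desired spectral isomorphism.
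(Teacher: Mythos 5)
Your first two paragraphs track the paper's own proof, which simply reads the corollary off from the proof of Theorem \ref{main}: transport to $(X_{\A_2}\times\Omega_0,\psi_0)$ via $\Phi$, split $L^2$ along the base algebra $\mathcal{C}=s^{-1}(\B(\A_2))$, obtain the discrete spectrum of the Mirsky rotation on $L^2(\mathcal{C})$, and claim countable Lebesgue spectrum on the orthocomplement. You are also right to isolate the real crux: Lemma \ref{P} and the Rokhlin--Sinai filtration give countable Lebesgue spectrum only on the orthocomplement of the \emph{actual} Pinsker algebra $\Pi_{\lambda}(\psi_0)\supseteq\mathcal{C}$, so the corollary needs the identification $\Pi_{\lambda}(\psi_0)=\mathcal{C}$ (equivalently $\Pi_{\eta}(S)=s^{-1}(\B(\A_2))$ mod $\eta$), or at least some control of the spectrum on $L^2(\Pi_{\eta}(S),\eta)\ominus L^2(\mathcal{C},\eta)$.

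Your third paragraph, which is supposed to supply this identification, does not work. First, Lemma \ref{GW} buys you nothing here: any sub-$\sigma$-algebra $\mathcal{F}\subseteq\Pi_{\lambda}(\psi_0)$ has zero entropy by definition of the Pinsker algebra, so realising $\mathcal{F}$ as a quasi-factor of the Mirsky system only reproves what you already know; in the proof of Theorem \ref{Veech} the quasi-factor device is applied to the \emph{other} system $(Y,T,\kappa)$, to show that the algebra generated by the fibre measures $\kappa_{(x,\omega)}$ lies inside $\Pi_{\lambda}(\psi_1)$ --- it is not a tool for excluding intermediate factors of the M\"{o}bius system itself. Second, and decisively, the assertion that ``Proposition \ref{VMV} then forces $\eE_{\lambda}(\textrm{Pr}_1\mid\mathcal{F})=0$'' for a zero-entropy $\mathcal{F}\supsetneq\mathcal{C}$ is precisely the content of Veech's Conjecture \ref{Veech-conj} (with $\mathcal{F}=\Pi_{\eta}(S)$), which the paper explicitly states as open: Proposition \ref{VMV} only gives $\eE_{\eta}(\textrm{pr}_1\mid s^{-1}\B(\A_2))=0$, and the vanishing of a conditional expectation onto a smaller $\sigma$-algebra says nothing about the one onto a larger $\sigma$-algebra. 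Third, there is no ``dense family in $L^2(\mathcal{C},\lambda)^{\perp}$'' version of Proposition \ref{VMV}: its proof rests on Theorem \ref{SNMV}, which is a statement specifically about $\bmu$ tested against continuous functions of $\bmu^2$, and no analogous orthogonality is available for a generic element of $L^2(\mathcal{C},\lambda)^{\perp}$. So your step 4 is circular where it is not vacuous. (For what it is worth, the paper's two-line proof silently assumes the same identification; an honest argument must either prove $\Pi_{\eta}(S)=s^{-1}(\B(\A_2))$ for every $\eta\in\mathcal{I}_S(\bmu)$ or control the spectrum on the intermediate piece by other means.)
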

\begin{proof} Our proof of Theorem \ref{main} shows that for any $\eta\in \mathcal{I}_S(\bmu)$
the unitary operator on the closed invariant subspace $L^2(X_{\A_3},\Pi (S),\eta)^{\perp}$
has countable Lebesgue spectrum and on $L^2(X_{\A_3},\Pi (S),\eta)$ has discrete spectrum,
independent of $\eta$. Thus for any $\eta\in \mathcal{I}_S(\bmu)$ the system $(X_{\A_3},\B(\A_3),\eta ,S)$
is spectrally isomorphic to $(X_{\A_2}\times \Omega_0, \nu_M \times b(\frac12,\frac12), \psi_0)$. 
\end{proof}

\smallskip

\begin{rem}\
\begin{enumerate}
\item W. Veech's theorem proves that the system $(X_{\A_2}\times \Omega_0, \nu_M \times b(\frac12,\frac12), \psi_0)$
is measure theoretically isomorphic to $(X_{\A_3}, \B(\A_3), \eta_M,S)$. Let us point out also that
the spectral type of  $(X_{\A_2}\times \Omega_0, \nu_M \times b(\frac12,\frac12), \psi_0)$
is given by the sum of discrete measure and a countable Lebesgue component. The discrete
measure is exactly the spectral measure of the rotation on the group $G=\prod_{p \in \pr} \Z/p^2\Z$.

\item  We further notice that if $(X_{\A_3}, \Pi(S), \eta_M,S)$ and  $(X_{\A_3}, \Pi_{\eta}(S), \eta ,S)$
are spectrally isomorphic, then again the hypothesis of Veech conjecture holds and hence by Veech theorem
(Theorem \ref{Veech}), Sarnak conjecture holds. We thus make the following conjecture between Chowla
conjecture and Veech's conjecture.
\end{enumerate}
\end{rem}

\smallskip

\begin{conj} For any $\eta \in \mathcal{I}_S(\bmu)$, dynamical systems $(X_{\A_3}, \Pi(S), \eta,S)$
and  $(X_{\A_3}, \Pi_{\eta}(S), \eta ,S)$ are spectrally isomorphic.
\end{conj}

\smallskip

\noindent{}Considering our result, we also would like to make the following conjecture. For this, we need to recall
the notion of a relative version of $K$-systems.



\smallskip

\begin{defn} Let $(X,\B,\mu,T)$ be a dynamical system and $\A$ be an invariant $\sigma$-algebra. $(X,\B,\mu,T)$ is said to be relatively $K$-system with respect to $\A$ if there exist an invariant $\sigma$-algebra $\A_1$ such that 
	$\A$ and $\A_1$ are independent, $\B= \A_1 \vee  \A$, and there exist a partition $\xi$ with $T^n\xi \nearrow \A_1$ and $T^{-n}\xi \searrow \{\emptyset, X\}$.  
\end{defn}
For more details on the relatively $\Kl$-system notion, we refer to \cite{Th1}, \cite{Kamin}.

It is noted in \cite{Th1} that every system is relatively $\Kl$ over its Pinsker factor. 
For a proof of this fact, we refer the reader to \cite{Kamin} (see the proof of (e) on page~22). 
We also warn the reader that the definition of relatively $\Kl$-systems in \cite{Kamin} is more general and precise than the one used in \cite{Th1}.

Finally, we note that the notion of a relative  $\Kl$-system was inspired by the Rokhlin–Sinai machinery (see Theorem 15, Chapter 6 of \cite{P}, and Chapter 13 of \cite{CFS}). In this paper, we extend this framework toward the analysis of the spectral properties of the system arising from number theory.

\smallskip

\noindent \begin{conj}\label{Veech-Sp} For any $\eta \in \mathcal{I}_S(\bmu)$, the dynamical system
$(X_{\A_3}, \B(\A_3) ,\eta,S)$ is relatively $K$-system with respect to the Pinsker factor of the
CSV measure. That is, it is relatively $K$-system with respect to the sigma algebra $\Pi(S)$.
\end{conj}

\smallskip
\begin{rem}
Notice that if the conjecture \ref{Veech-Sp} holds, then our spectral decomposition reduce to 
$$
L^2(X_{\A_3}, \B(\A_3),\eta)=L^2(X_{\A_3},\Pi(S),\eta)\oplus L^2(X_{\A_3},\Pi (S),\eta)^{\perp}.
$$
This simplifies the proof, since we need to apply Rokhlin's theorem only once.
 
One may ask if this latter conjecture is implied by Veech's conjecture. The answer is yes, since Chowla
and Sarnak M\"{o}bius conjectures are equivalent by the main result in \cite{AV}.
\end{rem}

\medskip




\textbf{Acknowledgment.}
The first author would like to thank Michael Lin for his invitation and for a discussion on the subject.
He gratefully thanks Ben Gurion University of the Negev, Center of advanced studies in Mathematics
of Ben-Gurion and the Institute of Mathematical Sciences at Chennai for their hospitality, where
part of this work was done. The present work was revisited during a most enjoyable visit to Rutgers University in 2024, and the first author would like to express his gratitude for the generous support and warm hospitality received. 
Finally, the authors wish to thank Prof. Michael Lin
for graciously permitting us to include a part of his work in subsection \ref{Lin}, which relies on our main theorem.

\end{document}